\newtheorem{theorem}{Theorem}[section]
\newtheorem{lemma}[theorem]{Lemma}
\newtheorem{corollary}[theorem]{Corollary}
\newtheorem{example}[theorem]{Example}
\newtheorem{proposition}[theorem]{Proposition}
\begin{document}
\title{Face monoid actions and tropical hyperplane arrangements}

\maketitle

\begin{center}
MARIANNE JOHNSON\footnote{Email \texttt{Marianne.Johnson@maths.manchester.ac.uk}.}  and MARK KAMBITES\footnote{Email \texttt{Mark.Kambites@manchester.ac.uk}.}

\medskip

School of Mathematics, \ University of Manchester, \\
Manchester M13 9PL, \ England.

\date{\today}

\keywords{}
\thanks{}
\end{center}
\numberwithin{equation}{section}

\begin{abstract}
We study the combinatorics of tropical hyperplane arrangements, and
their relationship to (classical) hyperplane face monoids.
We show that the \emph{refinement} operation on the faces of a tropical
hyperplane arrangement, introduced by Ardila and Develin \cite{Ard09} in
their definition of a \emph{tropical oriented matroid}, induces an action
of the hyperplane face monoid of the classical braid arrangement on the
arrangement, and hence on a number of interesting related structures.
Along the way, we introduce a new characterization of the \textit{types}
(in the sense of Develin and Sturmfels \cite{DevStu04}) of
points with respect to a tropical hyperplane arrangement, in terms of
\textit{partial bijections} which attain permanents of submatrices
of a matrix which naturally encodes the arrangement.
\end{abstract}

\renewcommand{\thefootnote}{\fnsymbol{footnote}}
\footnotetext{\noindent\emph{Key words:} hyperplane face monoids; tropical convexity; tropical oriented matroids; tropical matrix permanents. \emph{MSC:} \subjclass{15A80; 52B12; 20M30}}
\renewcommand{\thefootnote}{\arabic{footnote}}

\section{Introduction}
The set of faces of a (central) hyperplane arrangement, and more generally the set of covectors of an oriented matroid, provide an important source of examples in the class of finite monoids known as \emph{left regular bands}. Such monoids have been the focus of much research interest, following the celebrated work of Bidigare, Hanlon and Rockmore~\cite{BHR99} who showed that a number of well-known Markov chains, including the Tsetlin library and the riffle shuffle, are random walks on the faces of a hyperplane arrangement, and that the representation theory of the hyperplane face monoid could be used to analyze these Markov chains. This observation created substantial interest in random walks on hyperplane face monoids and related semigroups~\cite{AthDia10, Bjo08, BroDia98, ReSaWe14} and the theory of left regular band walks more generally \cite{Bro00, Bro04, Sal12}, and in various aspects of the representation theory of hyperplane face monoids \cite{Sal09, Sch06}, general finite left regular bands \cite{Sal07} and $\mathcal{R}$-trivial monoids~\cite{ASST15, BBBS11}, as well as the study of several related (quasi)varieties of semigroups and monoids~\cite{AMSV09, MSSte14}. The representation theory of hyperplane face monoids is also closely connected to the Solomon descent algebra (see \cite{Bro04, Sch06, Sol76, Tits76} for further details) and hyperplane face monoids themselves also arise in connection with combinatorial Hopf algebras \cite{AguMah06}.

The study of the tropical complex generated by the columns of a real
$n\times d$ matrix $M$ (see Section \ref{sect:tropfaces}) was initiated by
Develin and Sturmfels \cite{DevStu04}. It has been observed
\cite{Ard09} that such a tropical complex can be viewed as the
analogue of the abstract simplicial complex of faces resulting from a real
hyperplane arrangement, by associating to each column of $M$ a min-plus
linear form specifying a min-plus hyperplane (this will be explained in
detail in Section 3).

In the case of real hyperplane arrangements, the corresponding face poset is a monoid with a natural geometrically (or combinatorially) defined product. There is no obvious analogous monoid structure on the face poset of a tropical hyperplane arrangement, but we shall show instead that the hyperplane face monoid $\mathcal{P}_n$ of the classical \emph{braid arrangement} (see Section \ref{sect:braid}) acts upon the set of faces determined by $d$ min-plus tropical hyperplanes in $\mathbb{R}^n$.

The faces of the tropical complex generated by $M$ are labelled by \emph{types} which are the tropical analogue of the \textit{sign sequences}
of a real hyperplane arrangement. We identify the collection of all types
with the face poset $\mathcal{F}(M)$ of the tropical complex --- this is the
analogue of the oriented matroid of covectors of a real hyperplane
arrangement and hence provided the inspiration for the notion of a
\emph{tropical oriented matroid} introduced by Ardila and
Develin~\cite{Ard09}. We shall show that their definition induced a
natural right action of the monoid $\mathcal{P}_n$ on the tropical face poset $\mathcal{F}(M)$ and there is a geometric interpretation of this action when viewing the elements of $\mathcal{F}(M)$ as faces. By viewing $\mathcal{F}(M)$ instead as a poset of Boolean matrices (the types), this action is just the restriction of the action of $\mathcal{P}_n$ on the set of all $n \times d$ Boolean matrices described in Section \ref{sect:action} below.

Without the geometric intuition of the action of $\mathcal{P}_n$ it is not so easy to see why each element of $\mathcal{P}_n$ should act upon $\mathcal{F}(M)$; indeed, one needs to be able to determine whether a given $n \times d$ Boolean matrix is in fact a type labelling a face of $\mathcal{F}(M)$. In this paper we
give a purely combinatorial characterization of the types of $\mathcal{F}(M)$, and use this to give a combinatorial argument showing that $\mathcal{P}_n$ acts on the set of all types. To this end, we introduce another combinatorial object $\mathcal{P}(M)$ called the \emph{permanent structure} of $M$, and show that this provides an equivalent encoding of the combinatorial data of the tropical complex generated by $M$ (Corollary \ref{cor:sameinfo}). This may be of independent interest.

The paper is organised as follows. In Section 2 we recall some facts about
real hyperplane arrangements and the hyperplane face monoid $\mathcal{P}_n$, including its
natural action on the power set of $\lbrace 1, \dots, n \rbrace$.
In Section 3 we
recall the necessary background on tropical hyperplane arrangements. In
Section 4 we begin by making a distinction between Boolean matrices whose
corresponding order theoretic conditions can be \emph{satisfied} by a
point and those which cannot.  We show that a Boolean matrix is
\emph{satisfiable} if and only if every partial bijection contained within
it (as a submatrix) is satisfiable. In Section 5 we show that the
satisfiable partial bijections are precisely the max-plus
permanent-attaining ones (and hence a matrix is satisfiable if and only if
every partial bijection contained within it attains the permanent). In
Section 6 we give a precise characterization of those matrices which
correspond to types, in terms of permanent-attaining partial
bijections. As an application, we obtain a combinatorial understanding of
the action of
$\mathcal{P}_n$ on $\mathcal{F}(M)$.

\subsection*{Notation}
We denote by $[n]$ the set $\lbrace 1, \dots, n \rbrace$. A key object
of study will be the set of $d$-tuples of subsets of $[n]$. It is
convenient to identify each such tuple $(S_1, \dots, S_d)$ with the
$n \times d$ zero-one (Boolean) matrix $M$ such that $M_{ij} = 1$ if and
only if $i \in S_j$. We write $\mathbb{B}^{n \times d}$ for the set of
all $n \times d$ Boolean matrices. We write $M_{i\star}$ and $M_{\star j}$
for the $i$th row and $j$th column respectively of a (Boolean or other)
matrix $M$. For a Boolean matrix $M$ we will also view each row $M_{i\star}$ (respectively, column $M_{j\star}$) as the corresponding subset of $[d]$ (respectively $[n]$) when convenient.

Notice that the \textit{dual} $n$-tuple $(T_1, \dots, T_n)$
of subsets of $[d]$, where $i \in T_j$ if and only if $j \in S_i$ corresponds
to the transpose matrix, so with our identification, we have the
natural notation $(S_1, \dots, S_d)^{\mathsf T} = (T_1, \dots, T_n)$.

There is an obvious partial order on $\mathbb{B}^{n \times d}$ given by
setting $M \preceq N$ if and only if $M_{ij} \leq N_{ij}$ for all $i$ and
$j$. Equivalently in terms of tuples, $(S_1, \dots, S_d) \preceq
(T_1, \dots, T_d)$ if and only if $S_i \subseteq T_i$
for each $i$.

By a \emph{partial bijection} $\sigma : [d] \dashrightarrow [n]$ we mean
a bijection $\sigma: J \rightarrow I$ between subsets $J \subseteq [d]$
and $I \subseteq [n]$. We may identify such a partial bijection with the
matrix $\Sigma \in \mathbb{B}^{n\times d}$ defined by $\Sigma_{i, j} =1$
if and only if $i=\sigma(j)$; thus the partial bijections correspond to
the elements of $\mathbb{B}^{n\times d}$ having at most one non-zero entry
in each row and in each column.

\section{Face monoid actions}
In this section we recall some necessary background on hyperplane face monoids (introduced in Section \ref{sect:faces}).
Our main interest is in the hyperplane face monoid of the braid arrangement (Section \ref{sect:braid}) and a particular
action of this monoid on the set of Boolean matrices (Section \ref{sect:action}).

\subsection{The face monoid of a hyperplane arrangement} \label{sect:faces}
Consider a finite collection $\mathcal{A} = \{\mathcal{H}_1^0, \ldots, \mathcal{H}_d^0\}$ of (classical) \emph{linear hyperplanes} containing the origin in $\mathbb{R}^n$, each of which may be written in the form
$$\mathcal{H}_i^0 = \{ x \in \mathbb{R}^n: a_i \cdot x = 0\}$$
for some fixed choice of $a_i \in \mathbb{R}^n$. The complement of a hyperplane arrangement in $\mathbb{R}^n$ is a collection of open subsets of $\mathbb{R}^n$ called \emph{chambers}. More generally, a \emph{face} of the arrangement is defined to be a non-empty set which is an intersection of the form $\bigcap_{\mathcal{H}_i \in \mathcal{A}} \mathcal{H}_{i}^{\sigma_i},$
where $\sigma_i \in \{-,0,+\}$ and
\begin{eqnarray*}
\mathcal{H}^+_{i} = \{x\in \mathbb{R}^n: a_i\cdot x>0\},  \;\;\; \mathcal{H}^-_{i} = \{x\in \mathbb{R}^n: a_i \cdot x <0\}.
\end{eqnarray*}
Note that the sequence of signs $(\sigma_i)$ therefore specifies on which `side' of each hyperplane $\mathcal{H}_i^0 \in \mathcal{A}$ the given face lies. For each $x \in \mathbb{R}^n$ and each hyperplane $\mathcal{H}_i^0$, we let $\sigma_i(x) \in \{-, 0, +\}$ denote the sign indicating the position of the point $x$ relative to the hyperplane $\mathcal{H}_i^0$, so that $x \in \mathcal{H}_i^{\sigma_i(x)}$. It is then clear that the map $\sigma: \mathbb{R}^n \rightarrow \{-,0,+\}^{|\mathcal{A}|}$ defined by $\sigma(x) = (\sigma_1(x), \ldots, \sigma_d(x))$ is constant on faces and that distinct faces have distinct images under $\sigma$. Thus we may identify the set of all faces $\mathcal{F}$ with the set of sign sequences $\sigma(\mathbb{R}^n)$. Under this identification, the chambers are those elements of the image in which each component is non-zero, and the face $Z$ corresponding to the common intersection of all hyperplanes in $\mathcal{A}$ is identified with the all-zero sequence.

It is well known (see, for example, ~\cite{BroDia98}) that the set of all faces $\mathcal{F}$ of any finite linear hyperplane arrangement can be endowed with the structure of a monoid, whose identity element is $Z$, and whose product can be defined geometrically as follows. For faces $F$ and $G$ of a given hyperplane arrangement, choose points $f \in F$ and $g\in G$. The product $F*G$ is the very first face one encounters after leaving point $f$ when walking in a straight line towards $g$ (which could be the face $F$ itself); this definition does not depend upon the particular points selected, and it can be shown that this product is associative. The resulting finite monoid is called the \emph{hyperplane face monoid} of the arrangement and it is well-known and straightforward to check that each such monoid is a \emph{left regular band} (that is, $F*F=F$ and $F*G*F=F*G$ for all $F,G \in \mathcal{F}$) and that each chamber is a \emph{left zero} of this monoid (that is, for any chamber $C \in \mathcal{F}$, we have $C* F = C$ for all $F \in \mathcal{F}$). We refer the reader to \cite{Bro00} and \cite{Sal09} for general reference on properties of hyperplane face monoids.

The set of signs $L=\{-,0,+\}$ can be considered as a left regular band consisting of an identity element $0$ and two left zeroes, $-$ and $+$. It turns out that $\sigma$ induces a monoid embedding of $\mathcal{F}$ in the direct power $L^{|\mathcal{A}|}$ (see~\cite{Bro04} for example). In other words, upon identifying $\mathcal{F}$ with the set of possible sign sequences $\sigma(\mathbb{R}^n)$, for each $\mathcal{H}_i \in \mathcal{A}$ and all $F, G \in \mathcal{F}$ we have
$$(F*G)_i = \begin{cases}
F_i & \mbox{ if }F_i\neq 0\\
G_i & \mbox{  if } F_i =0.
\end{cases}$$
In fact, as observed in~\cite[\S 3E]{BroDia98}, the elements of $\mathcal{F} = \sigma(\mathbb{R}^n)\subseteq L^{|\mathcal{A}|}$ satisfy the covector axioms of an \emph{oriented matroid} (we refer the reader to ~\cite{BLSWZ99} for general reference on oriented matroids and also \cite[Section 2.3.1]{MSSte15} for a comparison between the terminology of hyperplane face monoids and oriented matroids). In \cite[Definition 3.5]{Ard09} Ardila and Develin suggested a definition of \emph{tropical oriented matroid}, based on their geometric understanding of both tropical hyperplane arrangements and the oriented matroids arising from (classical) hyperplane arrangements. We shall show that
their definition induces an \emph{action} of a certain classical hyperplane face monoid on the faces of a tropical hyperplane arrangement.

\subsection{The face monoid of the braid arrangement}\label{sect:braid}
In the case of Coxeter arrangements, the geometric description of the product of faces $F*G$ described above can be viewed as the Tits projection of $G$ on $F$, denoted ${\rm proj}_F G$ ~\cite{Tits76}. One well-studied example is the linear hyperplane arrangement
$$\mathcal{H}^0_{i,j} = \{x\in \mathbb{R}^n: x_i=x_j\} \mbox { for } 1\leq i <j \leq n,$$
known as the \emph{braid arrangement} $\mathcal{B}_{n}$, consisting of $\binom{n}{2}$ hyperplanes in $\mathbb{R}^n$ intersecting in the one-dimensional real vector space $\mathbb{R}(1,\ldots, 1)$. The corresponding hyperplane face monoid has received considerable attention in connection with shuffling schemes, random walks and the Solomon descent algebra of the
symmetric group \cite{BHR99,BroDia98, Bro00, Bro04,Sch06}.

It is clear that the braid arrangement yields $n!$ chambers of the form
$$C_{\tau}  = \{ (x_1, \ldots, x_n): x_{\tau(1)} > \cdots > x_{\tau(n)}\}.$$
for $\tau$ in the symmetric group ${\rm Sym}(n)$.
Moreover, since a face of this arrangement is defined to be a non-empty intersection of the form $\cap \mathcal{H}_{i,j}^{\sigma_{i,j}},$ where $\sigma_{i,j} \in \{-,0,+\}$,
\begin{eqnarray*}
\mathcal{H}^+_{i,j} = \{x\in \mathbb{R}^n: x_i>x_j\}, \mbox{ and } \mathcal{H}^-_{i,j} = \{x\in \mathbb{R}^n: x_i<x_j\},
\end{eqnarray*}
it is easy to see that the faces of the braid arrangement can be identified with ordered partitions of $[n]$: for example, the face $$\{x \in \mathbb{R}^7: x_1=x_3=x_4 >x_6>x_2 =x_7 >x_5\}$$ is identified with the ordered partition $(\{1,3,4\}, \{6\}, \{2,7\},  \{5\})$. Let $\mathcal{P}_n$ denote the set of all ordered set-partitions of $[n]$. Viewing faces $F$ and $G$ of the braid arrangement as ordered partitions $F=(F_1, \ldots, F_l)$ and $G=(G_1,\ldots, G_r)$ it is known (see~\cite{Bro04} or \cite{Sch06} for example) that their product $F * G \in \mathcal{P}_n$  may be written concretely as
\begin{equation}
\label{setprod}F*G = (F_1 \cap G_1, \ldots, F_1 \cap G_r, F_2 \cap G_1, \ldots,F_2 \cap G_r, \ldots, F_l \cap G_r )^\sharp,
\end{equation}
where $(\cdot)^\sharp$ denotes the operation of deleting any empty sets. It is straightforward to check that the partition $(\{1,\ldots, n\})$ acts as an identity on both sides, and that the chambers $C_{\tau}$ of the braid arrangement correspond to ordered partitions $(\{\tau(1)\}, \ldots, \{\tau(n)\})$, each of which is a left zero of the monoid.

Since $\mathcal{P}_n$ is a monoid it acts (by multiplication) on both the left and the right of $\mathcal{P}_n$ itself. It is clear that each chamber is a fixed point of the \emph{right action} of faces, since each chamber is a left zero element of the monoid. We will show that the right action of the monoid $\mathcal{P}_n$ on $\mathcal{C}$ can be extended in a natural way to give a right action on a number of interesting combinatorial structures resulting from a tropical hyperplane arrangement in $\mathbb{R}^n$.

\subsection{An action on Boolean matrices}\label{sect:action}
We begin by showing that $\mathcal{P}_n$ acts on the right of the power set of $[n]$ as follows: given any subset  $I \subseteq [n]$ and any $F \in \mathcal{P}_n$ we define
\begin{equation}
\label{setaction}I \circ F := \begin{cases}
\emptyset & \mbox{ if }I=\emptyset\\
 I\cap F_j & \mbox{  if } I \cap F_j \neq \emptyset \mbox{ and } I \cap F_k = \emptyset \mbox{ for all } k >j.
\end{cases}\end{equation}
That is, $I \circ F$ is the right-most non-empty intersection of $I$ with a
block of the partition $F$. Note that this operation is well-defined,
since if $I \neq \emptyset$, then $I$ must have non-empty intersection
with \emph{some} component of the ordered set partition $F$ of $[n]$.
\begin{proposition}\label{prop_paction}
The above operation gives a right action of the face monoid $\mathcal{P}_n$
on the power set of $[n]$. Moreover, $I \circ F \subseteq I$ for all $I \subseteq [n]$
and $F \in \mathcal{P}_n$.
\end{proposition}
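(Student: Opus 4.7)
The plan is to verify directly the containment $I \circ F \subseteq I$ and then the two axioms for a right monoid action, namely that the identity of $\mathcal{P}_n$ acts trivially and that $(I \circ F) \circ G = I \circ (F * G)$ for all $I \subseteq [n]$ and $F, G \in \mathcal{P}_n$. The containment is immediate from the definition: $I \circ F$ is either $\emptyset$ or some $I \cap F_j$, and both are subsets of $I$. For the identity axiom, recall that the identity of $\mathcal{P}_n$ is the singleton ordered partition $(\{1,\ldots,n\})$; for nonempty $I$, the unique (and hence rightmost) nonempty intersection is $I \cap [n] = I$, while the case $I = \emptyset$ is trivial.

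The main step is the associativity condition $(I \circ F) \circ G = I \circ (F * G)$. I would fix $F = (F_1, \ldots, F_l)$, $G = (G_1, \ldots, G_r)$ and $I \subseteq [n]$, and reduce to $I \neq \emptyset$. Using the explicit description \eqref{setprod}, the blocks of $F * G$ are the nonempty sets among $F_a \cap G_b$, listed in lexicographic order of $(a,b)$. Consequently the right-hand side equals $I \cap F_a \cap G_b$ where $(a,b)$ is the lexicographically largest pair with $I \cap F_a \cap G_b \neq \emptyset$. On the left-hand side, we first get $I \circ F = I \cap F_{a'}$ with $a'$ maximal such that $I \cap F_{a'} \neq \emptyset$, and then $(I \circ F) \circ G = I \cap F_{a'} \cap G_{b'}$ with $b'$ maximal such that $I \cap F_{a'} \cap G_{b'} \neq \emptyset$.

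It then suffices to check that the two pairs $(a,b)$ and $(a',b')$ coincide. Since $G_1, \ldots, G_r$ partition $[n]$, one has $I \cap F_c = \bigsqcup_{b} (I \cap F_c \cap G_b)$ for every $c$, so $I \cap F_c \neq \emptyset$ if and only if $I \cap F_c \cap G_b \neq \emptyset$ for some $b$. This forces $a = a'$, and with this common first coordinate fixed the maximality condition on the second coordinate is literally identical on the two sides, so $b = b'$.

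The only real obstacle is notational: one must keep straight how the lexicographic ordering of blocks of $F * G$ interacts with the ``rightmost block'' rule defining $\circ$. The resolution is the partition identity above, which allows us to commute the existential over $b$ past the nonemptiness test on $I \cap F_a$; after that observation the argument is bookkeeping.
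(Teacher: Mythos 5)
Your proposal is correct and follows essentially the same route as the paper's proof: verify the identity axiom directly, use the explicit lexicographic description \eqref{setprod} of $F*G$, and invoke the fact that $G$ partitions $[n]$ to identify the maximal indices on both sides of $(I\circ F)\circ G = I\circ(F*G)$. The only cosmetic difference is that you phrase the comparison via lexicographically largest pairs $(a,b)$ rather than the paper's description in words, which is the same argument.
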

\begin{proof}
Recall that the identity of the monoid $\mathcal{P}_n$ is the trivial
partition $([n])$; it is immediate from the definition that
$I \circ ([n]) = I \cap [n] = I$.

It remains to show that $I \circ (F*G) = (I\circ F) \circ G$ for any
$I \subseteq [n]$ and any two faces $F, G \in \mathcal{P}_n$. This is
clear if $I=\emptyset$, so suppose that $I$ is non-empty. By definition
\eqref{setaction}, $I \circ (F*G) = I \cap (F*G)_j$ where $j$ is the
largest index of the product providing a non-zero intersection with $I$.
Using the concrete interpretation \eqref{setprod} of the product $F*G$, we
conclude that $I \circ (F*G) = I \cap F_i\cap G_k$ where $i$ is the
maximal index of $F$ such that $I \cap F_i \cap G_s$ is non-empty for some
$s$, and $k$ is the maximal index of $G$ such that $I \cap F_i \cap G_k$
is non-empty. Since $G$ is a partition, this is equivalent to saying $I
\circ (F*G) = I \cap F_i\cap G_k$ where $i$ is the maximal index of $F$
such that $I \cap F_i$ is non-empty, and $k$ is the maximal index of $G$
such that $I \cap F_i \cap G_k$ is non-empty, which is precisely the
definition of $(I \circ F)\circ G$ obtained by applying \eqref{setaction}
twice.

That $I \circ F \subseteq I$ is immediate from the definition.
\end{proof}

The last part of Proposition~\ref{prop_paction} should not be misinterpreted as saying that the action of $\mathcal{P}_n$ is
monotonically decreasing with respect to the containment order: if $I \subseteq J \subseteq [n]$ then we
do \textit{not} necessarily have $I \circ F \subseteq J \circ F$. (What is true is that either $I \circ F \subseteq J \circ F$
or $I \circ F \cap J \circ F = \emptyset$.)

For any $d$ the action of $\mathcal{P}_n$ on the powerset of $[n]$ obviously extends
(componentwise) to an action on the set of $d$-tuples of subsets of $[n]$
and hence (through the identification described in the Introduction) on
the set $\mathbb{B}^{n \times d}$ of $n \times d$ Boolean matrices. It
follows from Proposition~\ref{prop_paction} that
$M \circ F \preceq M$ for all $M \in \mathbb{B}^{n \times d}$ and all $F \in \mathcal{P}_n$. In particular, the
action of $\mathcal{P}_n$ restricts to an action on any subset of
$\mathbb{B}^{n \times d}$ which is downward-closed under the partial order.

\section{Tropical hyperplane arrangements}
\subsection{Tropical linear forms and hyperplanes}\label{sect:trophyp}
For $a,b \in \mathbb{R}$ write $a \oplus b := \max(a,b)$, $a \boxplus b:= \min(a,b)$ and $a \otimes b:=a + b$. The operations $\oplus$ and $\otimes$ give $\mathbb{R}$ the structure of a \textit{semiring} (without zero element), called the \textit{max-plus semiring} (see \cite{But10,Cun79} for further reference). The min-plus semiring is defined similarly, and negation provides an isomorphism between these two structures, allowing one to easily interpret min-plus results in the max-plus setting and vice versa. The word \emph{tropical} is used throughout the literature to describe a number of algebraic and geometric constructions involving either the max-plus or min-plus semiring. For example, it is clear that these operations extend naturally to give notions of addition and scalar multiplication of vectors, hence giving $\mathbb{R}^n$ the structure of a (semi)module over the max-plus or min-plus semiring. A subset of $\mathbb{R}^n$ closed under  max-plus (or min-plus) operations will be referred to as a max-plus (respectively min-plus) submodule of $\mathbb{R}^n$; the finitely generated submodules have been termed max-plus (min-plus) ``tropical polytopes'' in the literature. The theory of tropical semimodules and convex sets has been developed by several authors, including Cohen, Gaubert and Quadrat~\cite{CGQ04}, and Develin and Sturmfels \cite{DevStu04}.

By analogy with the case of real hyperplanes, each $a\in \mathbb{R}^n$ can be used to define a max-plus or min-plus hyperplane, by considering the linear forms
\begin{eqnarray}
\label{maxhyp}(a_1 \otimes x_1) \oplus (a_2\otimes x_2) \oplus \cdots \oplus (a_n\otimes x_n)\\
\label{minhyp}(a_1\otimes x_1) \boxplus (a_2\otimes x_2) \boxplus \cdots \boxplus (a_n\otimes x_n),
\end{eqnarray}
in co-ordinate variables $x_1, \ldots, x_n$. The max-plus (respectively, min-plus) hyperplane is then defined to be the set of all points $x\in \mathbb{R}^n$ for which the maximum (respectively minimum) is \emph{attained twice} in expression \eqref{maxhyp} (respectively expression \eqref{minhyp}) above. In contrast to hyperplanes in usual Euclidean space (which split the containing space into two half-spaces), a tropical hyperplane divides the containing space $\mathbb{R}^n$ into $n$ tropical ``half''-spaces or \emph{sectors}, according to which co-ordinate is maximal (respectively, minimal). It is clear that the point $x=-a$ lies on the hyperplane defined by the linear form specified by $a$, and that this point is adjacent to each of the sectors, in the sense that one can move to any given sector by making some small perturbation to the co-ordinates of $a$. We therefore define a (max-plus/min-plus) tropical hyperplane with \emph{apex} $a$ to be the (max-plus/min-plus) tropical hyperplane defined by the linear form specified by $-a$. Note that this tropical hyperplane does not have a unique apex, however the apexes are precisely all tropical scalings $\lambda \otimes a$.

There is a natural partial order on $\mathbb{R}^n$ given by $x \leq y$ if and only if $x_i \leq y_i$ for all $i$. For $x, y \in \mathbb{R}^n$ we write
\begin{equation}
\label{resid}
\langle x \mid y \rangle = {\rm max} \{\lambda \in \mathbb{R}: \lambda \otimes x \leq y\} = {\rm min}_k\{y_k-x_k\}.
\end{equation}
This operation is a \emph{residuation} operator (see \cite{Blyth}) and plays an important role in tropical mathematics (see for example \cite{CGQ04}). We say that $x$ \emph{dominates} $y$ in position $i$ if $\langle x \mid y \rangle = y_i-x_i$. It is straight-forward to verify that domination is scale invariant, that is, if $x$ dominates $y$ in position $i$, then $\lambda \otimes x$ dominates $\mu \otimes y$ in position $y$ for all $\lambda, \mu \in \mathbb{R}$. The set
\begin{equation}
\label{Domset}
{\rm Dom}_i(x) = \Big\{ y \in \mathbb{R}^n: y_i-x_i = {\rm min}_k\{y_k-x_k\} \Big\}
\end{equation}
consisting of all elements of $\mathbb{R}^n$ dominated by $x$ in position $i$ can be seen to be a max-plus (and min-plus) submodule of $\mathbb{R}^n$ as well as a closed convex set when considered as a subset of Euclidean space $\mathbb{R}^n$ (see \cite[Lemma 1.1]{JKconvex} for example). If $H$ is the min-plus hyperplane with apex $a\in \mathbb{R}^n$, then the set
${\rm Dom}_i(a)$ is the closed sector consisting of points $x$ such that $x_i - a_i \leq x_k - a_k$ for all $k$.

Since domination is scale invariant, it follows that we may identify each hyperplane $H$ and the corresponding domination sets ${\rm Dom}_i(a)$ with their respective images in the (classical) quotient vector space $\mathbb{R}^n/\mathbb{R}(1,\ldots,1)$, termed \emph{tropical projective space}, as convenient. It will also be convenient (for the purpose of drawing pictures) to identify $\mathbb{R}^n/\mathbb{R}(1,\ldots,1)$ with $\mathbb{R}^{n-1}$, via the linear map
\begin{equation}
\label{eq:projection}
(v_1, \ldots, v_n) \mapsto (v_1-v_n, \ldots, v_{n-1}-v_n).
\end{equation}

\begin{figure}[h!]
\includegraphics[scale=0.5]{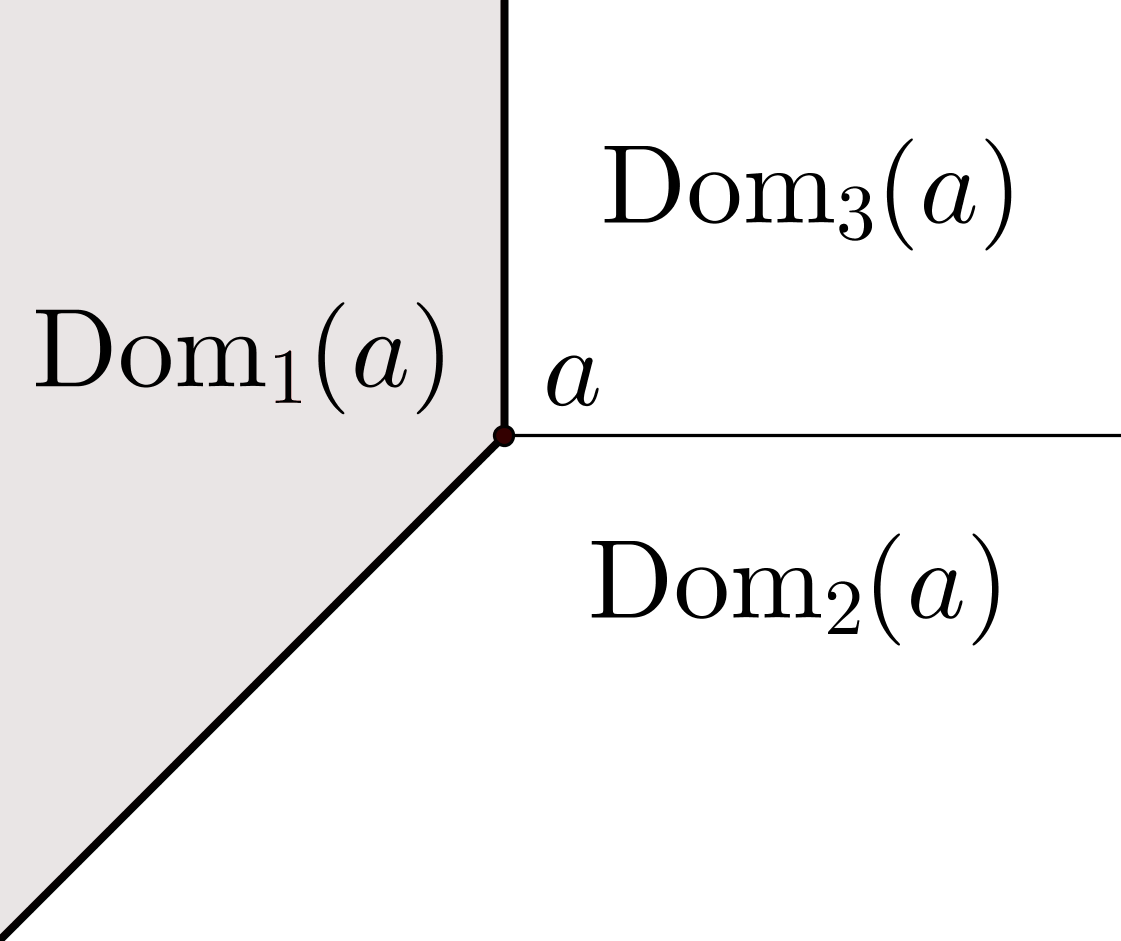}
\caption{The min-plus tropical hyperplane with apex $a \in \mathbb{R}^3$.}
\end{figure}

Figure 1 illustrates a single min-plus tropical hyperplane, viewed in tropical projective space $\mathbb{R}^3/\mathbb{R}(1,1,1)$ which is identified with $\mathbb{R}^2$ (consisting of three half rays corresponding to the three ways in which the minimum can be attained at least twice), together with a labelling of the sectors following the usual co-ordinate axis conventions. Note that by setting $a=(0,0,0)$ and extending the half-rays to give full real hyperplanes, one obtains the braid arrangement $\mathcal{B}_{3}$.

\subsection{The faces of a tropical hyperplane arrangement}\label{sect:tropfaces}
Let $M$ be a real $n \times d$ matrix.
The columns of $M$ determine a min-plus \emph{tropical
hyperplane arrangement} $\mathcal{H}_1, \ldots, \mathcal{H}_d$ in $\mathbb{R}^n$, where $\mathcal{H}_j$ is the min-plus tropical hyperplane with apex
$M_{\star j}$, or in other words, the set of all $y \in \mathbb{R}^n$ such that the minimum over $k$ of $y_k - M_{k,j}$ is
attained twice.

The \emph{type} of a point $x \in \mathbb{R}^n$ with respect to the
hyperplane arrangement specified by $M$ is an
$n \times d$ Boolean matrix (or equivalently, an $d$-tuple of subsets of
$[n]$) recording in which of the closed sectors
relative to each $\mathcal{H}_j$ the point $x$ lies. Specifically, the type of
$x$ with respect to $M$ is the Boolean matrix $T$ where $T_{ij} = 1$ if
and only  $x \in {\rm Dom}_i(M_{\star j})$.

Types may be viewed as the tropical analogue of the sign sequences associated with a real hyperplane arrangement; the
collection of all types (whilst not a matroid in the usual sense) is clearly the analogue of the oriented matroid of
covectors of a real hyperplane arrangement, and motivated the notion of \textit{tropical oriented matroids}
studied in \cite{Ard09}. Develin and Sturmfels \cite[Theorem 15]{DevStu04} have shown that the min-plus hyperplane arrangement given by (the columns of) $M$ induces a polyhedral cell complex structure on $\mathbb{R}^n$, called the \emph{tropical complex} generated by $M$, whose face poset $\mathcal{F}(M)$ is naturally labelled by the types.

Since domination is scale invariant we note that the type of $\lambda \otimes x$ is the same as the type of $x$ for all $\lambda \in \mathbb{R}$, $x \in \mathbb{R}^n$. It follows that we may identify the hyperplanes $\mathcal{H}_j$ and the faces of the resulting tropical complex with their respective images in tropical projective space $\mathbb{R}^n/\mathbb{R}(1,\ldots,1)$, and hence by \eqref{eq:projection} $\mathbb{R}^{n-1}$, when convenient. Under this latter identification the faces of the max-plus tropical polytope are precisely the bounded faces in the tropical complex \cite[Theorem 15]{DevStu04}. By the \emph{dimension} of a face we shall mean the dimension of the corresponding polyhedron in $\mathbb{R}^{n-1}$.

\begin{example}
Consider the matrix
$$M = \left(\begin{array}{cccc}
-8& 10 & 15 & 0\\
10 & 10 & 5 & -10\\
0&0&0&0
\end{array}\right)$$
The min-plus tropical hyperplane arrangement specified by the columns of $M$ is shown in Figure 2. As viewed in projective space, the resulting tropical complex has 12 cells of dimension 2 (such as the bounded cell $F$ and the unbounded cell $E$), 18 cells of dimension 1 (such as the bounded cell $G$, which is a face of both $E$ and $F$) and 7 cells of dimension 0 (such as the point marked $H$, which is a face of each of the other labelled cells). The bounded cells have been shaded, and these cells constitute the max-plus polytope generated by the columns of $M$ (see \cite[Theorem 15]{DevStu04}). We identify the cells $E,F, G$ and $H$ with their corresponding types:
\begin{eqnarray*}
E \ &=& \ \left(\begin{smallmatrix}
0& 1 & 1 & 1\\
1 & 0 & 0 & 0\\
0&0&0&0
\end{smallmatrix}\right) \ = \
(\{2\}, \{1\}, \{1\}, \{1\}), \\
F \ &=& \ \left(\begin{smallmatrix}
0& 1 & 1 & 0\\
1 & 0 & 0 & 0\\
0&0&0&1
\end{smallmatrix}\right) \ =
\ (\{2\}, \{1\}, \{1\}, \{3\}),\\
G \ &=& \ \left(\begin{smallmatrix}
0& 1 & 1 & 1\\
1 & 0 & 0 & 0\\
0&0&0&1
\end{smallmatrix}\right)
\ = \ (\{2\}, \{1\}, \{1\}, \{1,3\}),\;\; \\
H \ &=& \ \left(\begin{smallmatrix}
0& 1 & 1 & 1\\
1 & 1 & 0 & 0\\
0&0&0&1
\end{smallmatrix}\right)
\ = \ (\{2\}, \{1,2\}, \{1\}, \{1,3\}) \\
\end{eqnarray*}
\end{example}

\begin{figure}[h!]
\includegraphics[scale=0.4]{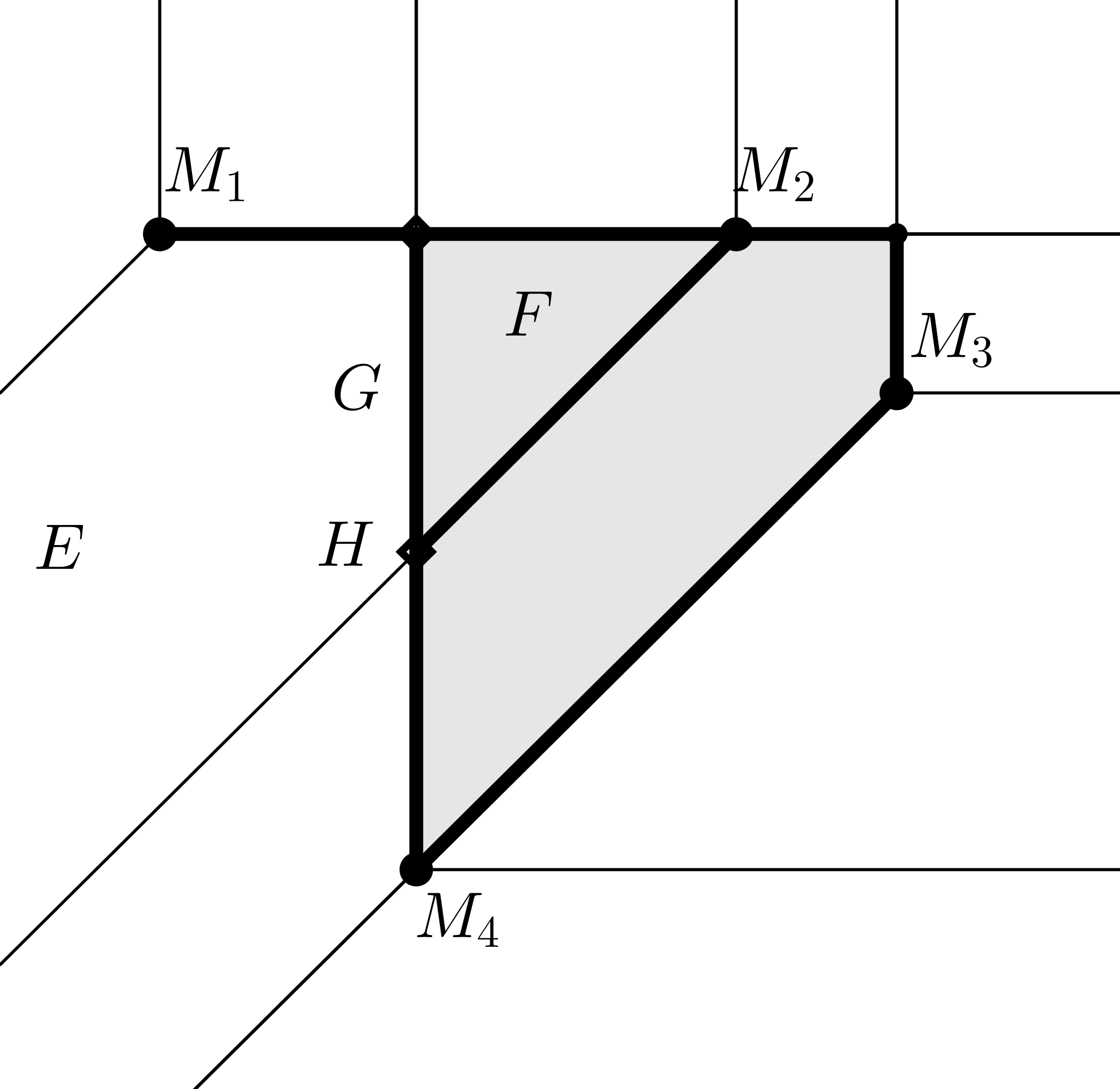}
\caption{An arrangement of four min-plus tropical hyperplanes in $\mathbb{R}^3$. The bounded cells of the resulting tropical complex are shaded. The union of the bounded regions is the max-plus polytope generated by the points $M_i$.}
\end{figure}

Let $\mathcal{F}(M)$ denote the set of all faces with respect to the
hyperplane arrangement specified by $M$. Since the faces are in one-to-one
correspondence with the types specified by this arrangement, we shall
regard $\mathcal{F}(M)$ as a subset of $\mathbb{B}^{n\times d}$.
\section{Satisfiable Boolean matrices}
Continuing with the notation of Section \ref{sect:tropfaces}, we fix a
real $n \times d$ matrix $M$ whose columns $M_{\star j}$ specify a
tropical hyperplane arrangement in $\mathbb{R}^n$; all types in this section are considered with respect to the columns of $M$. More generally, for each $S \in \mathbb{B}^{n\times d}$ we define
\begin{equation}\label{cell}D_S = \bigcap_{S_{i,j} \neq 0} {\rm Dom}_i(M_{\star j})\end{equation}
and say that $S$ is \emph{satisfiable} (with respect to $M$) if $D_S \neq \emptyset$. That is, $S$ is satisfiable if there exists $x \in \mathbb{R}^n$ such that $M_{\star j}$ dominates $x$ in position $i$ for all $i,j$ such that $S_{i,j}=1$; we say that such a point $x$ \emph{satisfies} $S$. (For example, with respect to the hyperplane arrangement given in Example 3.1 above, the (column) tuple $(\{1\}, \{2\}, \{1,2\}, \{1, 3\})$ is not satisfiable. The tuple $(\{2\}, \{2\}, \{1,2\}, \{1, 3\})$ is satisfiable, however it is not the type of any point.) Notice that if $S$ is satisfiable with respect to $M$ then the set $D_S$, which by definition is a non-empty intersection of Euclidean convex max-plus submodules of $\mathbb{R}^n$ of the form \eqref{Domset}, is itself a Euclidean convex max-plus submodule of $\mathbb{R}^n$.

It is clear from the definitions above that $x$ satisfies $S$ if and only if $S \preceq T$ where $T$ is the type of $x$, and that the cells of the tropical complex are the relative interiors of the sets $D_T$ indexed by types $T$. It follows that the set $\mathcal{S}(M)$ of all satisfiable (with respect to $M$) Boolean matrices corresponding to the hyperplane arrangement specified by $M$ is a downward-closed subset of $\mathbb{B}^{n\times d}$ and hence (by our observation at the end of Section \ref{sect:action}) $\mathcal{P}_n$ acts on the right of $\mathcal{S}(M)$. The set $\mathcal{F}(M)$ of all types with respect to $M$  is clearly not downward-closed, but it turns out that the action of $\mathcal{P}_n$ also restricts to an action on this set. In order to study this action, we begin with some foundational results concerning satisfiability which will enable us to provide a combinatorial characterization of the types.

Recall from the Introduction that we identify each partial bijection $\sigma: [d] \dashrightarrow [n]$ with the Boolean matrix $\Sigma \in \mathbb{B}^{n\times d}$ defined by $\Sigma_{i, j} =1$ if and only if $i=\sigma(j)$. We say that $S \in \mathbb{B}^{n\times d}$ \emph{contains} the partial bijection $\sigma$ if $\Sigma \preceq S$.

Our main result in this section is the following ``local'' characterization of satisfiability.

\begin{theorem}
\label{thm:bijectionsat}
Let $M \in \mathbb{R}^{n \times d}$ and $S \in \mathbb{B}^{n\times d}$. Then $S$ is satisfiable with respect to $M$ if and only if all of the partial bijections contained in $S$ are satisfiable with respect to $M$.
\end{theorem}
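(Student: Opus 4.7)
The plan is to recast satisfiability of $S$ as feasibility of a system of difference constraints on $\mathbb{R}^n$ and to use the classical negative-cycle characterization of such systems to extract a partial bijection witnessing non-satisfiability when $S$ itself fails to be satisfiable.

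The forward direction is immediate from the definitions: if $x \in D_S$ and $\Sigma \preceq S$, then $x \in D_\Sigma$ because every domination condition demanded by $\Sigma$ is also demanded by $S$. For the reverse direction I would argue by contrapositive. Observe that $x$ satisfies $S$ exactly when, for every $(i,j)$ with $S_{ij} = 1$ and every $k \in [n]$, one has $x_i - x_k \leq M_{ij} - M_{kj}$. This is a system of difference constraints, which by the standard shortest-path criterion is infeasible if and only if the weighted digraph $G_S$ on vertex set $[n]$ --- with an edge $i \to k$ labelled $j$ and weighted $M_{ij} - M_{kj}$ for each triple $(i,j,k)$ with $S_{ij} = 1$ --- contains a cycle of strictly negative total weight. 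Assuming $S$ is not satisfiable, I would fix such a negative cycle $i_1 \to i_2 \to \cdots \to i_m \to i_1$ of minimum possible length, witnessed by column labels $j_1, \ldots, j_m$.

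I would then claim that both the rows $i_1, \ldots, i_m$ and the columns $j_1, \ldots, j_m$ are pairwise distinct, whereupon the Boolean matrix $\Sigma$ with $\Sigma_{i_t, j_t} = 1$ and zeros elsewhere is a partial bijection with $\Sigma \preceq S$, and the same cycle exhibits a negative cycle in $G_\Sigma$, showing $\Sigma$ is not satisfiable. Row-distinctness is immediate: a repetition $i_s = i_t$ would split the cycle at that common vertex into two strictly shorter cycles whose weights sum to the negative total, contradicting minimality. For column-distinctness, the key observation is a swap: if $j_s = j_t = j$ with $s < t$, then since $S_{i_s, j} = S_{i_t, j} = 1$, the edges $i_s \to i_{t+1}$ and $i_t \to i_{s+1}$ (both witnessed by column $j$) also belong to $G_S$, and a direct computation shows that their weights sum to the same value as the original pair of edges. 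Replacing the old pair by the swapped pair splits the cycle into two strictly shorter cycles whose total weight remains negative, so one of them is a negative cycle --- once again contradicting minimality.

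The main obstacle is the column-distinctness step: it requires recognizing the correct swap operation and verifying both that the swapped edges remain valid (which uses crucially that $S$ contains $1$s in both rows of the shared column) and that the swap preserves the cycle's total weight. Once that combinatorial ingredient is in place, the rest of the proof follows the standard template for reducing infeasibility of a difference-constraint system to a minimal combinatorial certificate.
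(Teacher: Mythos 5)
Your proof is correct, but it follows a genuinely different route from the paper's. The paper argues entirely inside the tropical framework: it first reduces satisfiability of $S$ to satisfiability of all submatrices with at most one entry per row (Lemma~\ref{lem:sub} and Corollary~\ref{cor:satis}, using that a max-plus sum of points satisfying the single-entry relaxations again satisfies $S$), and then builds up from the partial bijections by repeatedly merging two satisfying points into one via the residuation construction of Lemma~\ref{prop:satis}. You instead observe that satisfiability of $S$ is exactly feasibility of the difference-constraint system $x_i - x_k \leq M_{ij} - M_{kj}$ (over all $(i,j)$ with $S_{ij}=1$ and all $k$), invoke the classical negative-cycle criterion (Bellman--Ford shortest paths, or LP duality) for infeasibility, and then extract from a \emph{minimum-length} negative cycle a partial bijection $\Sigma \preceq S$ carrying the same negative cycle: row-distinctness by splitting at a repeated vertex, column-distinctness by your weight-preserving edge swap within a repeated column, each contradicting minimality. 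Both the swap's validity (it only needs $S_{i_s,j}=S_{i_t,j}=1$) and the weight bookkeeping check out, and a self-loop has weight zero, so the minimal negative cycle genuinely has length at least two. What your approach buys is brevity and an explicit small certificate of unsatisfiability, at the cost of outsourcing the analytic content to the standard feasibility theorem for difference constraints (which you should state or cite precisely, since it carries the whole burden of the ``no negative cycle $\Rightarrow$ feasible'' direction); it also anticipates Section~5, since your negative cycle on distinct rows and columns says precisely that the cyclic shift of the corresponding partial bijection strictly beats it in the permanent computation, which is essentially the (ii)$\Rightarrow$(i) implication of Theorem~\ref{permanentsatis}. The paper's longer route is self-contained and its intermediate constructions (the max-plus module structure of $D_S$ and the residuation merge) are reused in spirit elsewhere in the paper.
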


To prove this result we require a number of technical lemmas.

\begin{lemma}
\label{lem:sub}
Let $M \in \mathbb{R}^{n \times d}$. An element $S=(S_{1\star}, \ldots, S_{n\star})^{\mathsf{T}} \in \mathbb{B}^{n\times d}$ is satisfiable with respect to $M$ if and only if $(A, S_{2\star}, \ldots, S_{n\star})^{\mathsf{T}}$ is satisfiable with respect to $M$ for all $A \subseteq S_{1\star}$ with $|A| \leq 1$.
\end{lemma}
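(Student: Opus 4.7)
The forward implication is immediate from monotonicity: since $(A, S_{2\star}, \ldots, S_{n\star})^{\mathsf{T}} \preceq S$ for every $A \subseteq S_{1\star}$, any point satisfying $S$ also satisfies the smaller matrix. I would therefore concentrate on the converse.

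If $S_{1\star} = \emptyset$, then $(A, S_{2\star}, \ldots, S_{n\star})^{\mathsf{T}} = S$ for $A = \emptyset$ and the conclusion is immediate. Otherwise, for each $j \in S_{1\star}$ the hypothesis with $A = \{j\}$ furnishes a point $z^{(j)} \in \mathbb{R}^n$ satisfying
$$S^{(j)} \;:=\; (\{j\}, S_{2\star}, \ldots, S_{n\star})^{\mathsf{T}}.$$
Because each set ${\rm Dom}_i(M_{\star j})$ is invariant under tropical scaling, I would replace $z^{(j)}$ by $(-z^{(j)}_1) \otimes z^{(j)}$ and thus assume $z^{(j)}_1 = 0$ for all $j \in S_{1\star}$. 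My candidate satisfying point for $S$ would then be the componentwise maximum
$$x \;:=\; \bigoplus_{j \in S_{1\star}} z^{(j)}.$$

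To verify $x \in D_S$, I would check separately the two kinds of defining constraints. For a constraint $x \in {\rm Dom}_i(M_{\star j})$ with $i \geq 2$ and $S_{ij} = 1$: the row-$i$ part of each $S^{(j')}$ coincides with that of $S$, so every summand $z^{(j')}$ lies in ${\rm Dom}_i(M_{\star j})$, and the conclusion follows because ${\rm Dom}_i(M_{\star j})$ is a max-plus submodule of $\mathbb{R}^n$. For a constraint with $i = 1$ and $j \in S_{1\star}$: since $z^{(j')}_1 = 0$ for every $j' \in S_{1\star}$, we have $x_1 = 0 = z^{(j)}_1$, while $x_k \geq z^{(j)}_k$ by construction, so the required inequality $x_1 - M_{1j} \leq x_k - M_{kj}$ reduces to $z^{(j)}_1 - M_{1j} \leq z^{(j)}_k - M_{kj}$, which is exactly the statement $z^{(j)} \in {\rm Dom}_1(M_{\star j})$.

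The subtle point, and hence the main obstacle, is the row-$1$ verification: the set ${\rm Dom}_1(M_{\star j})$ does \emph{not} contain the other summands $z^{(j')}$ with $j' \neq j$, so one cannot simply appeal to max-plus submodule closure as in the previous case. The role of the common rescaling $z^{(j)}_1 = 0$ is precisely to pin $x_1$ to the first coordinate of the ``correct'' summand $z^{(j)}$ while allowing the other coordinates only to grow, which is exactly the direction of change that preserves the bound $y_1 - M_{1j} \leq y_k - M_{kj}$.
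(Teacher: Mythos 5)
Your proposal is correct and follows essentially the same route as the paper: normalize each witness $z^{(j)}$ so its first coordinate is $0$, take the max-plus sum, handle rows $2,\ldots,n$ via closure of the ${\rm Dom}$ sets under max-plus sums, and handle row $1$ using that $x_1$ stays pinned at $z^{(j)}_1$ while the other coordinates can only increase. The only (harmless) difference is that the paper additionally rescales the relevant columns of $M$ to have first entry $0$ and deduces $M_{\star s} \leq x(s) \leq x$, whereas you argue the row-$1$ dominance directly by monotonicity; both verifications are equivalent.
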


\begin{proof}
As we have already observed, $\mathcal{S}(M)$ is a downward-closed subset of $\mathbb{B}^{n \times d}$. It follows immediately that if $S$ is satisfiable, then so too is $(A, S_{2\star}, \ldots, S_{n\star})^{\mathsf{T}}$, where $A \subseteq S_{1\star}$.

Suppose now that $(A, S_{2\star}, \ldots, S_{n\star })^{\mathsf{T}}$ is satisfiable for each $A \subseteq S_{1\star}$ with $|A| \leq 1$. Since domination is scale invariant we note that we may assume without loss of generality (by rescaling if necessary) that each column $M_{\star s}$ has 0 in the first position. For each $s \in S_{1\star}$ choose $x(s) \in \mathbb{R}^n$ satisfying $(\{s\}, S_{2\star}, \ldots, S_{n\star})^{\mathsf{T}}$. We shall show that $x=\bigoplus_{s \in S_{1\star}} x(s)$ satisfies $S$. First note that we may choose each $x(s)$ so that its first co-ordinate (and hence the first co-ordinate of $x$) is $0$.

Since $x(s)$ satisfies $(\{s\}, S_{2\star}, \ldots, S_{n\star})^{\mathsf{T}}$, we have in particular that $M_{\star s}$ dominates $x(s)$ in position 1:
$$0=x(s)_1-M_{1,s} = {\rm min}_k\{ x(s)_k - M_{k,s}\},$$
from which it follows easily that $M_{\star s} \leq x(s)$. On the other hand, by definition of $x$ we see that $x(s)\leq x$ and hence $M_{\star s} \leq x$, and from the latter we can also deduce that $M_{\star s}$ dominates $x$ in position 1 for all $s \in S_1$:
$$x_1-M_{1,s}=0 = {\rm min}_k\{ x_k - M_{k,s}\}.$$
Now let $S'=(\emptyset, S_{2\star}, \ldots, S_{n\star})^{\mathsf{T}}$ and consider the max-plus submodule $D_{S'}$ defined via \eqref{cell}. Since each $x(s)$ satisfies $S'$, by definition it is contained in $D_{S'}$. Thus the max-plus sum $x=\bigoplus_{s \in S_{1\star}} x(s)$ must be contained in $D_{S'}$ too. Now since $M_{\star s}$ dominates $x$ in position 1 for all $s \in S_1$ and $x$ satisfies $S'$ we conclude from \eqref{cell} that $x$ satisfies $S$.
\end{proof}

It is clear that the analogues of Lemma~\ref{lem:sub} concerning positions $2, \ldots, n$ also hold.

\begin{corollary}
\label{cor:satis} Let $M \in \mathbb{R}^{n \times d}$ and $S =(S_{1\star}, \ldots, S_{n\star})^{\mathsf{T}} \in \mathbb{B}^{n\times d}$. Then $S \in \mathcal{S}(M)$ if and only if $A=(A_{1\star}, \ldots, A_{n\star})^{\mathsf{T}} \in \mathcal{S}(M)$ for all $A \subseteq S$ with $|A_{i\star}| \leq 1$.
\end{corollary}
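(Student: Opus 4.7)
The forward direction is immediate from the downward-closure of $\mathcal{S}(M)$ observed earlier in the section: if $S$ is satisfiable and $A \preceq S$ then $A$ is satisfiable too. So the content is in the reverse direction, and the plan is to iterate Lemma~\ref{lem:sub} one row at a time.

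More precisely, I would argue by induction on $k \in \{0, 1, \dots, n\}$ that the following statement holds: $S$ is satisfiable with respect to $M$ if and only if $(A_{1\star}, \dots, A_{k\star}, S_{(k+1)\star}, \dots, S_{n\star})^{\mathsf{T}}$ is satisfiable for every choice of subsets $A_{i\star} \subseteq S_{i\star}$ with $|A_{i\star}| \leq 1$ for $i = 1, \dots, k$. The base case $k=0$ is trivial. For the inductive step, assuming the statement holds for $k-1$, apply the analogue of Lemma~\ref{lem:sub} for the $k$th row (noted immediately after the lemma to hold by symmetry) to each matrix of the form $(A_{1\star}, \dots, A_{(k-1)\star}, S_{k\star}, \dots, S_{n\star})^{\mathsf{T}}$: this matrix is satisfiable iff $(A_{1\star}, \dots, A_{(k-1)\star}, A_{k\star}, S_{(k+1)\star}, \dots, S_{n\star})^{\mathsf{T}}$ is satisfiable for every $A_{k\star} \subseteq S_{k\star}$ with $|A_{k\star}| \leq 1$. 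Combining with the inductive hypothesis gives the $k$th statement. Taking $k = n$ yields the corollary.

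There is really no obstacle here beyond verifying that the row-analogue of Lemma~\ref{lem:sub} applies inside the inductive step: the hypothesis that the $k$th row of the matrix we are shrinking equals $S_{k\star}$ (rather than some $A_{k\star}$ already reduced) is maintained because we process one row at a time, and the entries in the already-reduced rows are simply treated as fixed row data when we apply the lemma to row $k$. Thus the induction goes through cleanly, and the final $k=n$ statement is exactly the conclusion of the corollary.
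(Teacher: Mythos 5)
Your argument is correct and is essentially the paper's own proof: the forward direction follows from downward-closure of $\mathcal{S}(M)$, and the converse is exactly the ``repeated application of Lemma~\ref{lem:sub} and its analogues for positions $2,\dots,n$'' that the paper invokes, which you have merely made explicit as a row-by-row induction. Nothing further is needed.
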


\begin{proof}
Since $\mathcal{S}(M)$ is downward-closed, the forward implication is immediate. On the other hand, if $(A_{1\star}, \ldots, A_{n\star})^{\mathsf{T}}$ is satisfiable for all $A \preceq S$ with $|A_{i\star}| \leq 1$, then by repeated application of Lemma \ref{lem:sub} (and its analogues for positions $2, \ldots, n$) we see that $S$ must be satisfiable too.
\end{proof}

The following lemma combined with Corollary \ref{cor:satis} will enable us to prove the desired result (Theorem \ref{thm:bijectionsat}).

\begin{lemma}
\label{prop:satis}
Let $M \in \mathbb{R}^{n \times d}$ and let $S=(S_{1\star}, \ldots, S_{n\star})^{\mathsf{T}}, T=(T_{1\star}, \ldots, T_{n\star})^{\mathsf{T}} \in \mathcal{S}(M)$ be such that $i \in S_{j\star} \cap T_{k\star}$. Then there exists $U=(U_{1\star},\ldots, U_{n\star})^{\mathsf{T}}\in \mathcal{S}(M)$ such that:\\
(i) $i \in U_{j\star} \cap U_{k\star}$; and\\
(ii) if $l \in S_{p\star} \cap T_{p\star}$, then $l \in U_{p\star}$.
\end{lemma}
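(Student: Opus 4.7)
The plan is to construct $U$ as the type of a carefully chosen point $z \in \mathbb{R}^n$, built from any two points $x$ satisfying $S$ and $y$ satisfying $T$. Choosing $U$ in this way makes satisfiability automatic. The key observation driving the construction is that while $D_S$ is advertised as a max-plus submodule, each individual domination set ${\rm Dom}_p(M_{\star l})$ is \emph{also} a min-plus submodule, as remarked immediately after \eqref{Domset}. Indeed, a direct calculation gives $\min(x_r, y_r) - M_{r,l} = \min(x_r - M_{r,l}, y_r - M_{r,l})$, and taking the minimum over $r$ on both sides shows that the componentwise minimum of any two points of ${\rm Dom}_p(M_{\star l})$ lies in ${\rm Dom}_p(M_{\star l})$. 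Consequently, setting $z := x \boxplus y$ (componentwise minimum) automatically puts $z$ in every ${\rm Dom}_p(M_{\star l})$ containing both $x$ and $y$. In particular, whenever $l \in S_{p\star} \cap T_{p\star}$, both $x$ and $y$ lie in ${\rm Dom}_p(M_{\star l})$, so does $z$, and hence $l \in U_{p\star}$; this secures condition (ii).

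For condition (i) I would exploit scale invariance of domination: replacing $x$ and $y$ by $\lambda \otimes x$ and $\mu \otimes y$ for suitable real $\lambda, \mu$ shifts the quantities $\min_r(x_r - M_{r,i})$ and $\min_r(y_r - M_{r,i})$ independently without disturbing satisfiability of $S$ or $T$. Choose the scalings so that these two minima agree with a common value $\alpha$. By hypothesis the first is attained at $r = j$ and the second at $r = k$, so after scaling $x_j - M_{j,i} = \alpha$, $y_k - M_{k,i} = \alpha$, while $y_j - M_{j,i} \geq \alpha$ and $x_k - M_{k,i} \geq \alpha$. A short calculation then yields
\begin{equation*}
z_j - M_{j,i} \ = \ \min(\alpha, y_j - M_{j,i}) \ = \ \alpha \ = \ \min(x_k - M_{k,i}, \alpha) \ = \ z_k - M_{k,i},
\end{equation*}
and $\min_r(z_r - M_{r,i}) = \min(\alpha,\alpha) = \alpha$ too. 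Thus $M_{\star i}$ dominates $z$ in both position $j$ and position $k$, so $i \in U_{j\star} \cap U_{k\star}$, as required.

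The main conceptual obstacle is recognising that the correct combining operation is the \emph{min-plus} sum $x \boxplus y$, rather than the max-plus sum $x \oplus y$ suggested by the max-plus submodule structure of $D_S$ and $D_T$. The max-plus sum does land in $D_S \cap D_T$, but can easily destroy the property that position $j$ (or position $k$) still attains the minimum in the $M_{\star i}$-column, because increasing coordinates can spoil an equality like $x_j - M_{j,i} = \min_r(x_r - M_{r,i})$ while only an inequality is known at other positions. Once the right construction is identified and the two ``witness minima'' are aligned by tropical rescaling, the verification is routine.
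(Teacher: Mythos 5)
Your argument is correct, but it takes a genuinely different route from the paper's proof of Lemma~\ref{prop:satis}. You take witnesses $x\in D_S$ and $y\in D_T$, align the two residuation values $\langle M_{\star i}\mid x\rangle$ and $\langle M_{\star i}\mid y\rangle$ by tropical rescaling, and then use the componentwise minimum $z=x\boxplus y$ directly, taking $U$ to be its type: condition (ii) holds because each ${\rm Dom}_p(M_{\star l})$ is min-plus closed (as the paper notes after \eqref{Domset}), and condition (i) follows from your aligned-minima computation; both checks are sound. The paper instead normalizes so that $M_{j,i}=M_{k,i}=0$ and $x_j=y_k=0$ (which is exactly your alignment with $\alpha=0$) and constructs $u=\bigoplus_{l}\min(\langle M_{\star l}\mid x\rangle,\langle M_{\star l}\mid y\rangle)\otimes M_{\star l}$, a max-plus combination of the columns of $M$; since $\langle M_{\star l}\mid x\boxplus y\rangle=\min(\langle M_{\star l}\mid x\rangle,\langle M_{\star l}\mid y\rangle)$, the paper's $u$ is precisely the residuated projection of your $z$ onto ${\rm Col}_\oplus(M)$, so the two constructions are close relatives. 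Yours is shorter and more elementary (no sandwiching $M_{\star i}\leq u\leq x,y$ is needed), while the paper's version additionally produces a witness in the column space of $M$, in the spirit of Theorem~\ref{permanentsatis}(iii), though that extra feature is not needed for the lemma nor for its use in the proof of Theorem~\ref{thm:bijectionsat}. One small slip in your closing discussion: the max-plus sum $x\oplus y$ does not in general lie in $D_S\cap D_T$ (it need not satisfy $S$, since $y$ need not); what is true is that, like $z$, it satisfies the entrywise meet of $S$ and $T$. That remark is not load-bearing, and your proof stands.
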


\begin{proof}
Let $x,y \in \mathbb{R}^n$ be such that $x$ satisfies $S$ and $y$ satisfies $T$. Thus by assumption $M_{\star i}$ dominates $x$ in position $j$ and $y$ in position $k$. We want to construct $u \in \mathbb{R}^n$ such that $M_{\star i}$ dominates $u$ in both positions $j$ and $k$, and that in position $p$, $u$ is dominated by any columns $M_{\star l}$ which dominate both $x$ and $y$ in that position.

First of all we note that we may assume without loss of generality that $M_{j,i}=M_{k,i}=0$, as translating the $i$th column of $M$ by the vector $-M_{j,i}e_j -M_{k,i}e_k$ clearly does not alter the combinatorial structure of the resulting face poset. Moreover, since domination is scale invariant, we may also assume without loss of generality that $x_j = y_k = 0$.

Recalling the definition of the residuation operator $\langle \cdot | \cdot \rangle$ given in \eqref{resid}, we define
$$u = \bigoplus_{l=1}^{d} \; {\rm min}(\langle M_{\star l} | x\rangle, \langle M_{\star l} | y\rangle)\otimes M_{\star l}$$
and claim that $u$ satisfies the required conditions. Notice that the $i$th term in the above definition of $u$ is ${\rm min(0,0)} \otimes M_{\star i} = M_{\star i}$, giving $u \geq M_{\star i}$ and hence $u_t - M_{t,i} \geq 0$ for all $t \in [n]$. It is also easy to see that $x \geq u$ and $y \geq u$, since for example by \eqref{resid} we have $x \geq \langle M_{\star l} | x\rangle \otimes M_{\star l}$ for all $l$, and hence
$$x \; \geq \; \bigoplus_{l=1}^{d}  \langle M_{\star l} | x\rangle \otimes M_{\star l} \; \geq \;  \bigoplus_{l=1}^{d}  {\rm min}(\langle M_{\star l} | x\rangle, \langle M_{\star l} | y\rangle)\otimes M_{\star l} \; = \; u.$$
Now
$$0 = M_{j,i} \leq u_j \leq x_j = 0,$$
from which we deduce that $u_j =0.$  By an identical argument using $y$ instead of $x$, we get $u_k = 0$. It now follows that
$$u_j - M_{j,i} = u_k - M_{k,i} = 0 \leq u_t - M_{t,i}, \mbox{ for all } t \in [n],$$
or in other words, that $u$ is dominated by $M_{\star i}$ in both of these positions.

Now suppose that $M_{\star l}$ dominates both $x$ and $y$ in position $p$, that is, $\langle M_{\star l} | x\rangle  = x_p - M_{p,l}$ and $\langle M_{\star l} | y\rangle = y_p - M_{p,l}$. But now looking at the $l$th term in the definition of $u$ gives
\begin{eqnarray*}
   u_p &\geq& {\rm min}(\langle M_{\star l} | x\rangle, \langle M_{\star l} | y\rangle) \otimes M_{p,l}\\
        &\geq& {\rm min}(x_p - M_{p,l},  x_p - M_{p,l}) + M_{p,l}\\
        & \geq & {\rm min}(x_p, y_p).
\end{eqnarray*}
Since we know that $u \leq x$ and $u \leq y$, giving $u_p \leq {\rm min}(x_p, y_p)$, we must have $u_p = {\rm min}(x_p, y_p)$. We are now in position to show that $M_{\star l}$ dominates $u$ in position $p$. Assume without loss of generality that $u_p=x_p \leq y_p$ (the argument being so far symmetric in $x$ and $y$). Then because $M_{\star l}$ dominates both $x$ and $y$  in position $p$ we see that
$$\langle M_{\star l}| x \rangle \;= \;x_p - M_{p,l} \; \leq \;  y_p - M_{p,l} \; = \; \langle M_{\star l}| y \rangle.$$
Thus by the definition of $u$ we see that
$$u \; \geq \; {\rm min}(\langle M_{\star l} | x\rangle, \langle M_{\star l} | y\rangle) \otimes M_{\star l} \; =\;  \langle M_{\star l} | x\rangle \otimes M_{\star l}.$$
By \eqref{resid} this means that $\langle M_{\star l} | u\rangle \geq \langle M_{\star l} | x\rangle$, giving
$${\rm min}_k\{u_k-M_{k,l}\} \; = \; \langle M_{\star l} | u\rangle \; \geq \;  \langle M_{\star l} | x\rangle \; =\;  x_p - M_{p,l} \; = \;  u_p - M_{p,l},$$
or in other words, $M_{\star l}$ dominates $u$ in position $p$.
\end{proof}

\begin{proof}[Proof of Theorem~\ref{thm:bijectionsat}]
Since $\mathcal{S}(M)$ is downward-closed, if $S$ is satisfiable, then so too is each $A\preceq S$. Suppose then that all partial bijections contained in $S$ are satisfiable. We shall show that each $A \preceq S$ with $|A_{i\star}| \leq 1$ must be satisfiable (by repeated application of Lemma~\ref{prop:satis}). Then by Corollary~\ref{cor:satis} we can conclude that $S$ is satisfiable.

Given $A \preceq S$ with $|A_{i\star}| \leq 1$ we consider the collection $\mathcal{B}_A$ of all partial bijections contained within $A$. We apply Lemma~\ref{prop:satis} to all pairs $(S,T)$ of \emph{maximal} elements $S, T \in \mathcal{B}_A$ which differ in a single column, add the resulting satisfiable element $U$ to $\mathcal{B}_A$, and continue in this fashion. We note that at each step of this procedure $\mathcal{B}_A$ contains only satisfiable elements that are less than or equal to $A$ in the order. Since there are finitely many such elements, it is clear that this process must terminate. The resulting set $\mathcal{B}_A$ then has unique maximum element $A$.
\end{proof}
\section{Satisfiable partial bijections and the max-plus permanent}
The \emph{max-plus tropical permanent} of $X \in M_k(\mathbb{R})$ is the real number
$${\rm perm}(X) = \bigoplus_{\tau \in {\rm Sym}(k)} X_{\tau(1),1} \otimes \cdots \otimes X_{\tau(k),k}$$
where ${\rm Sym}(k)$ denotes the symmetric group on $[k]$. The matrix $X$ is said to be \emph{max-plus tropically singular} if the expression for the permanent above is attained by at least two different permutations, and \emph{max-plus tropically non-singular} otherwise.

Let $\sigma : [d] \dashrightarrow [n]$ be a partial bijection with domain of definition $J \subseteq [d]$ and image $I \subseteq [n]$. We say
that $\sigma$ is \textit{permanent-attaining} (with respect to $M$) if $\sigma$ attains the maximum in the calculation of the max-plus permanent for the corresponding $k \times k$ submatrix of $M$, that is, if for every bijection $\tau : I \to J$ we have
$$\bigotimes_{j \in J}  M_{\sigma(j),j} \ \geq \ \bigotimes_{j \in J} M_{\tau(j),j}.$$
We define the \textit{permanent structure} $\mathcal{P(M)}$ of $M$ to be the set of all permanent-attaining partial bijections.

\begin{proposition}
The permanent-structure $\mathcal{P}(M)$ is a downward-closed subset of $\mathbb{B}^{n \times d}$.
\end{proposition}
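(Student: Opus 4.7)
The plan is to unpack what downward-closedness means in this context and then reduce the claim to a single combinatorial observation about permanents of submatrices. First, I observe that $\mathcal{P}(M)$ is, by definition, a subset of the set of partial bijections, which themselves form a downward-closed subset of $\mathbb{B}^{n \times d}$ (since deleting a $1$ from a matrix with at most one $1$ per row and column leaves a matrix with the same property). So if $A \preceq \Sigma$ with $\Sigma \in \mathcal{P}(M)$, then $A$ automatically corresponds to some partial bijection $\sigma' : J' \dashrightarrow [n]$ obtained from the partial bijection $\sigma: J \to I$ encoded by $\Sigma$ by restricting its domain to some subset $J' \subseteq J$ (and hence its image to $I' = \sigma(J') \subseteq I$). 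The task is therefore to prove: if $\sigma : J \to I$ is permanent-attaining for the submatrix $M|_{I, J}$, then every restriction $\sigma' = \sigma|_{J'} : J' \to I'$ is permanent-attaining for the smaller submatrix $M|_{I', J'}$.

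I would prove this contrapositively. Suppose some restriction $\sigma'$ fails to be permanent-attaining, so there is a rival bijection $\tau' : J' \to I'$ with
$$\bigotimes_{j \in J'} M_{\tau'(j), j} \ > \ \bigotimes_{j \in J'} M_{\sigma'(j), j} \ = \ \bigotimes_{j \in J'} M_{\sigma(j), j}.$$
Since $\sigma$ restricted to $J \setminus J'$ is a bijection onto $I \setminus I'$, I can splice $\tau'$ with $\sigma|_{J \setminus J'}$ to build a bijection $\tau : J \to I$ by setting $\tau(j) = \tau'(j)$ for $j \in J'$ and $\tau(j) = \sigma(j)$ for $j \in J \setminus J'$. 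Multiplying the two displayed inequalities (or rather the strict inequality above and the trivial equality on the complement) by the common tropical factor $\bigotimes_{j \in J \setminus J'} M_{\sigma(j), j}$ yields
$$\bigotimes_{j \in J} M_{\tau(j), j} \ > \ \bigotimes_{j \in J} M_{\sigma(j), j},$$
contradicting the assumption that $\sigma$ is permanent-attaining.

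There is no real obstacle here: the only point requiring any care is bookkeeping around the fact that a restriction of a partial bijection changes both the indexing sets $J$ and $I$ simultaneously, so one must check that the extended map $\tau$ really is a bijection $J \to I$ (which it is, because $\tau'$ and $\sigma|_{J \setminus J'}$ have disjoint images $I'$ and $I \setminus I'$ by construction). After this verification the proof is a one-line tropical factorisation.
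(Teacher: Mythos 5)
Your proof is correct and follows essentially the same route as the paper: both reduce to showing that a restriction $\sigma|_{J'}$ of a permanent-attaining $\sigma$ is permanent-attaining, by splicing a strictly better bijection on the restricted submatrix together with $\sigma$ on the complementary indices to produce a bijection beating $\sigma$ on its full submatrix, a contradiction. The only cosmetic difference is that the paper extends a permanent-attaining rival $\theta$ on the small submatrix while you extend an arbitrary strictly better rival $\tau'$; the comparison of tropical products is identical.
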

\begin{proof}
Let $\sigma \in \mathcal{P}(M)$ and suppose $\tau \preceq \sigma$. Clearly $\tau$ is a partial bijection. Suppose for a contradiction
that it is not permanent-attaining. Choose a permanent-attaining partial bijection $\theta$ with the same domain of definition and image as $\tau$.
Since $\tau \preceq \sigma$, $\sigma$ maps the domain of $\theta$ onto the range of $\theta$; this means that we can extend $\theta$ to
a partial bijection $\theta'$ with the same domain and image as $\sigma$, by setting $\theta'(i)=\sigma(i)$ for all $i$ not in the domain of
$\theta$. But now it is easily seen that $\theta'$ makes a greater contribution to the permanent calculation than $\sigma$, contradicting
the assumption that $\sigma$ is permanent-attaining.
\end{proof}
It follows from our observation in Section \ref{sect:action} that $\mathcal{P}_n$ also acts on the right of $\mathcal{P}(M)$. The following theorem shows that the  permanent-attaining partial bijections are precisely the satisfiable partial bijections (or in fact, the partial bijections which are satisfied by elements of the max-plus column space of $M$).

\begin{theorem}
\label{permanentsatis}
Let $M \in \mathbb{R}^{n\times d}$ and let $\sigma: [d] \dashrightarrow [n]$ be a partial bijection. Then the following are equivalent:
\begin{itemize}
\item[(i)] $\sigma$ is permanent-attaining with respect to $M$;
\item[(ii)] $\sigma$ is satisfiable with respect to $M$;
\item[(iii)] there exists $y \in {\rm Col}_{\oplus}(M)$ satisfying $\sigma$.
\end{itemize}
\end{theorem}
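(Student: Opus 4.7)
I would prove the theorem via the implication cycle (iii)$\Rightarrow$(ii)$\Rightarrow$(i)$\Rightarrow$(iii). The first is immediate: any $y \in {\rm Col}_{\oplus}(M)$ satisfying $\sigma$ certainly witnesses the satisfiability of $\sigma$. For (ii)$\Rightarrow$(i), suppose some $x \in \mathbb{R}^n$ satisfies $\sigma$, with domain $J \subseteq [d]$ and image $I \subseteq [n]$. Domination of $x$ by $M_{\star j}$ in position $\sigma(j)$ yields
$$x_{\sigma(j)} - M_{\sigma(j), j} \leq x_p - M_{p, j} \quad \text{for every } p \in [n].$$
For any competing bijection $\tau : J \to I$, specialising to $p = \tau(j)$ and summing over $j \in J$ gives $\sum_j M_{\sigma(j), j} - \sum_j M_{\tau(j), j} \geq \sum_j x_{\sigma(j)} - \sum_j x_{\tau(j)} = 0$, since both $\sigma$ and $\tau$ biject $J$ onto $I$ (so both $x$-sums equal $\sum_{i \in I} x_i$). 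Hence $\sigma$ is permanent-attaining.

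The substantive direction is (i)$\Rightarrow$(iii). Enumerate $J = \{j_1, \ldots, j_k\}$ and $I = \{i_1, \ldots, i_k\}$ with $\sigma(j_l) = i_l$, and consider the $k\times k$ rescaled matrix
$$A_{a, l} := M_{i_a, j_l} - M_{i_l, j_l} \qquad (a, l \in [k]),$$
whose diagonal vanishes. Applying permanent-attaining to the bijection obtained from $\sigma$ by post-composing with a cyclic permutation on $\{l_0, l_1, \ldots, l_{m-1}\} \subseteq [k]$ yields
$$A_{l_1, l_0} + A_{l_2, l_1} + \cdots + A_{l_0, l_{m-1}} \leq 0,$$
which says that the directed graph on $[k]$ with edge $l \to a$ of weight $A_{a, l}$ has no positive-weight cycle. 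A standard Bellman--Ford (longest-path) argument then produces a potential $\psi : [k] \to \mathbb{R}$ with $\psi_a - \psi_l \geq A_{a, l}$ for all $a, l \in [k]$.

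Setting $\lambda_l := \psi_l - M_{i_l, j_l}$ and $y := \bigoplus_{l=1}^k \lambda_l \otimes M_{\star j_l}$ (which lies in ${\rm Col}_{\oplus}(M)$), I would verify directly that $y$ satisfies $\sigma$. For each $l_0 \in [k]$, the $l=l_0$ term of the max defining $y_p$ yields $y_p - M_{p, j_{l_0}} \geq \psi_{l_0} - M_{i_{l_0}, j_{l_0}}$ for every $p \in [n]$; and evaluating at $p = i_{l_0}$ gives $y_{i_{l_0}} = \max_l(\psi_l + A_{l_0, l}) = \psi_{l_0}$ by the potential inequality (the max being attained at $l = l_0$, where $A_{l_0, l_0} = 0$). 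Thus the lower bound is attained at $p = i_{l_0}$, showing $M_{\star j_{l_0}}$ dominates $y$ in position $i_{l_0}$, as required. The main obstacle is precisely this (i)$\Rightarrow$(iii) step: recognising the permanent-attaining hypothesis as a no-positive-cycle condition on a naturally associated weighted digraph, and then finding the right max-plus combination of columns via the resulting potential. The other implications are routine.
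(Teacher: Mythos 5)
Your proposal is correct, and the decisive implication is handled quite differently from the paper. The easy directions match: (iii)$\Rightarrow$(ii) is trivial in both, and your (ii)$\Rightarrow$(i) is the same summation argument over a competing bijection $\tau$ that the paper uses (you phrase it directly, the paper by contradiction). For (i)$\Rightarrow$(iii), however, the paper first treats the square case $|I|=|J|=n=d$ by perturbing $M$ to $M(\varepsilon)$ so that $\sigma$ becomes the \emph{unique} permanent-attaining permutation, invokes \cite[Theorem 4.2]{DevSanStu05} and \cite[Proposition 17, Theorem 15, Corollary 12]{DevStu04} to get a point of type $\Sigma$ in ${\rm Col}_\oplus(M(\varepsilon))$, passes to the limit $\varepsilon\to 0$ via a compactness argument in tropical projective space, and then reduces the rectangular case to the square one by lifting a point from the column space of the $k\times k$ submatrix using residuation. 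Your route replaces all of this with a self-contained combinatorial construction: the permanent-attaining hypothesis, applied to the bijections obtained from $\sigma$ by cyclic modifications, is exactly a no-positive-cycle condition on the weighted digraph with weights $A_{a,l}=M_{i_a,j_l}-M_{i_l,j_l}$; a longest-path (Bellman--Ford) potential $\psi$ then yields explicit coefficients $\lambda_l=\psi_l-M_{i_l,j_l}$, and your verification that $y=\bigoplus_l \lambda_l\otimes M_{\star j_l}$ satisfies $\sigma$ is correct as written (the lower bound $y_p - M_{p,j_{l_0}}\ge \psi_{l_0}-M_{i_{l_0},j_{l_0}}$ holds for all $p\in[n]$, and $y_{i_{l_0}}=\psi_{l_0}$ by the potential inequalities, so no separate rectangular-to-square reduction is needed). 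What each approach buys: the paper's argument leans on known structural results about tropical non-singularity and the Develin--Sturmfels cell complex, which fits its broader narrative; yours is more elementary, constructive (it exhibits the witnessing point and its coefficients explicitly), and avoids the limiting argument entirely. One small point to note, which applies equally to the paper's own final step: $y$ is a combination of only the columns indexed by $J$, so to place it in ${\rm Col}_\oplus(M)$ with all coefficients in $\mathbb{R}$ one should observe that taking the remaining coefficients sufficiently negative leaves $y$ unchanged.
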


\begin{proof}
Let $J \subseteq [d]$ and $I \subseteq [n]$ be the domain and image respectively of $\sigma$.

It is clear that (iii) implies (ii). We shall show that (ii) implies (i) and (i) implies (iii). Let $\Sigma=(\Sigma_{1\star}, \ldots, \Sigma_{n\star})^{\mathsf{T}} \in \mathbb{B}^{n\times d}$ be the element defined by $j \in \Sigma_{i\star}$ if and only if $\sigma(j)=i$.

Suppose that (ii) holds.  By definition there exists $y$ in
$$D_{\Sigma} = \bigcap_{j \in J} {\rm Dom}_{\sigma(j)}(M_{\star j}).$$
Thus for each $j \in J$ we have
$$y_{\sigma(j)} - M_{\sigma(j),j} = {\rm min}_k\{ y_k - M_{k,j}\},$$
or in other words
\begin{equation}
\label{yexists}
M_{k,j}- M_{\sigma(j),j} \leq y_k - y_{\sigma(j)}, \;\;\;\mbox{ for all }j \in J \mbox{ and all }k \in [n].
\end{equation}
Suppose for contradiction that $\sigma$ does not attain the permanent. Then there is another bijection $\tau:J \rightarrow I$ with $\tau \neq \sigma$ such that
$$\sum_{j \in J} M_{\sigma(j),j} < \sum_{j \in J} M_{\tau(j),j},$$
giving
$$0< \sum_{j \in J} M_{\tau(j),j} - M_{\sigma(j),j}.$$
But it follows from \eqref{yexists} that the right hand side of this inequality is less than or equal to $\sum_{j \in J} (y_{\tau(j)} - y_{\sigma(j)})$, giving a contradiction since the latter is 0 (because $\sigma$ and $\tau$ are partial bijections with the same domain and \emph{image}).

Now suppose that (i) holds. Suppose first that $|I|=|J|=n = d$. For each $\varepsilon >0$ let $M(\varepsilon)$ be the $n \times n$ square matrix such that
$$M(\varepsilon)_{i,j} = \begin{cases}
M_{i,j}&\mbox{if } i=\sigma(j)\\
M_{i,j} - \varepsilon & \mbox{otherwise}.
\end{cases}$$
Then the permanent of $M(\varepsilon)$ is attained uniquely by $\sigma$ and hence $M(\varepsilon)$ tropically non-singular. It follows from \cite[Theorem 4.2]{DevSanStu05} and \cite[Proposition 17]{DevStu04} that there is a vector $y(\varepsilon) \in {\rm Col}_{\oplus}(M(\varepsilon))$ whose type \emph{with respect to the columns of $M(\varepsilon)$} is an $n$-tuple of singleton sets whose union is $[n]$. Moreover, it is not hard to see that ${\rm type}(y(\varepsilon))$ - again, with respect to the columns of $M(\varepsilon)$ -  must be $\Sigma$, since (having already shown that (ii) implies (i)) we note that any other vector of singletons will yield another permanent-attaining permutation.

For each $\varepsilon>0$ we define
$$D_\Sigma(\varepsilon) = \bigcap_{j \in J} {\rm Dom}_{\sigma(j)}(M(\varepsilon)_{\star j}).$$
Thus $D_\Sigma(\varepsilon)$ is the set of all elements $x \in \mathbb{R}^n$ having $\Sigma \preceq {\rm type}(x)$. Since $\Sigma$ is the type of a point in the max-plus column space of $M(\varepsilon)$, we note that by \cite[Theorem 15]{DevStu04} $D_\Sigma(\varepsilon)$ corresponds to a closed and bounded subset of $\mathbb{R}^{n-1}$ (identified with tropical projective space $\mathbb{R}^n/\mathbb{R}(1,\ldots, 1)$ as in \eqref{eq:projection}). Moreover, it is easy to see that as $\varepsilon \rightarrow 0$ we obtain a decreasing sequence of nested closed and bounded subsets of $\mathbb{R}^{n-1}$. Fixing $\kappa>0$, the set
$$D_\Sigma(\kappa) = \bigcup_{0 < \varepsilon \leq \kappa} D_{\Sigma}(\varepsilon)$$
is therefore compact in projective space, with respect to the usual topology induced from $\mathbb{R}^{n-1}$. Since each $y(\varepsilon)$ with $\varepsilon \leq \kappa$ is contained in $D_\Sigma(\kappa)$, there is a convergent subsequence of $y(\varepsilon)$'s in projective space, the limit of which must satisfy the limiting set of inequalities. In other words, there exists $y \in \mathbb{R}^n$ whose type $T$ \emph{with respect to the columns of $M$} contains $\Sigma$. By \cite[Corollary 12]{DevStu04} we note that face corresponding to $T$ in the tropical complex generated by $M$ must be bounded (each row of $T \in \mathbb{B}^{n \times d}$ is non-zero) and hence, by \cite[Theorem 15]{DevStu04}, this face is contained in the column space of $M$.  But now $y$ satisfies $\Sigma$ with respect to $M$ and $y \in {\rm Col}_{\oplus}(M)$.

Finally, suppose that $1 \leq k=|I|=|J| \leq n, d$. Let $X$ denote the $k \times k$ submatrix of $M$ whose rows are indexed by $I$ and whose columns are indexed by $J$. By the argument above, there exists $\hat{y}$ in the column space of $X$ satisfying $\sigma$, that is:
$$\hat{y} \in {\rm Col}_{\oplus}(X) \cap \bigcap_{j \in J} {\rm Dom}_{\sigma(j)} (X_{\star j}) \subseteq \mathbb{R}^k.$$
Since $\hat{y} \in {\rm Col}_{\oplus}(X)$ we can write $\hat{y} = \bigoplus_{j \in J} \alpha_j \otimes X_j$ for some $\alpha_j \in \mathbb{R}$, and one can deduce from definition \eqref{resid} that
$$\hat{y} = \bigoplus_{j \in J} \langle X_{\star j} | \hat{y} \rangle \otimes X_{\star j}.$$
Now set
$$y = \bigoplus_{j \in J} \langle X_{\star j} | \hat{y} \rangle \otimes M_{\star j}.$$
Thus $y \in {\rm Col}_{\oplus}(M)$ and $y_t = \hat{y}_t$ for all $t \in I$. Notice that for all $j \in J$
$$ \langle X_j | \hat{y} \rangle = \hat{y}_{\sigma(j)} - X_{\sigma(j),j} = y_{\sigma(j)} - M_{\sigma(j),j}.$$
Thus for all $t \in [n]$ and all $j \in J$ we have
$$y_t ={\rm max}_{l \in J}(\langle X_{\star l} | \hat{y}\rangle + M_{t,l}) \geq \langle X_{\star j} | \hat{y} \rangle + M_{t,j} = y_{\sigma(j)} - M_{\sigma(j),j} + M_{t,j},$$
giving $y_t - M_{t,j} \geq y_{\sigma(j)} - M_{\sigma(j),j}$
for all $j \in J$ and all $t \in [n]$. In other words, $M_j$ dominates $y$ in position $\sigma(j)$ for all $j \in J$, showing that $y$ satisfies $\Sigma$ as required.
\end{proof}

In light of the previous theorem we may rephrase Theorem \ref{thm:bijectionsat} as follows:

\begin{corollary}
\label{cor:bijectionsat}
Let $M \in \mathbb{R}^{n \times d}$ and let $S \in \mathbb{B}^{n\times d}$. Then $S$ is satisfiable with respect to $M$ if and only if every partial bijection contained in $S$ is permanent-attaining with respect to $M$.
\end{corollary}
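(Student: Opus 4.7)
The plan is to observe that the statement is an immediate consequence of the two main results already proven in the preceding sections, namely Theorem~\ref{thm:bijectionsat} and Theorem~\ref{permanentsatis}. So the proof is essentially a two-line chaining of equivalences, with no genuine obstacle; all the real work has been done already.

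More precisely, I would proceed as follows. By Theorem~\ref{thm:bijectionsat}, satisfiability of $S$ with respect to $M$ is equivalent to satisfiability of every partial bijection $\sigma$ contained in $S$ (i.e.\ every $\Sigma \preceq S$ where $\Sigma$ has at most one nonzero entry in each row and column). Next, for any such partial bijection $\sigma$, Theorem~\ref{permanentsatis} gives the equivalence of satisfiability of $\sigma$ and being permanent-attaining (these are conditions (ii) and (i) of that theorem respectively). Chaining these two equivalences yields exactly the statement of the corollary.

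The only mildly subtle point — which need only be noted in passing — is that Theorem~\ref{permanentsatis} is stated for a single partial bijection, while Theorem~\ref{thm:bijectionsat} quantifies over \emph{all} partial bijections contained in $S$. But since the equivalence (i)$\Leftrightarrow$(ii) of Theorem~\ref{permanentsatis} holds uniformly for each such $\sigma$ (independently of the others), the universal quantifier passes through the equivalence without issue. There is nothing further to verify, so the proof reduces to a one-sentence citation of the two theorems.
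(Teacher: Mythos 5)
Your proposal is correct and matches the paper exactly: the corollary is presented there as a rephrasing of Theorem~\ref{thm:bijectionsat} in light of Theorem~\ref{permanentsatis}, i.e.\ precisely the chaining of the two equivalences you describe. Nothing further is needed.
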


It follows from Theorem \ref{permanentsatis} that one may deduce the permanent structure $\mathcal{P}(M)$ from the collection of all types $\mathcal{F}(M)$. In the next section we shall prove the converse, hence showing that $\mathcal{P}(M)$ and $\mathcal{F}(M)$ encode exactly the same combinatorial data about the tropical complex. We conclude this section with the following crucial observation.

\begin{lemma}
\label{IJtypes}
Let $M \in \mathbb{R}^{n \times d}$ and let $T \in \mathbb{B}^{n \times d}$ be a type with respect to the columns of $M$. If $T$ contains a partial bijection  $\sigma: [d] \dashrightarrow [n]$, then $T$ contains all of the permanent-attaining (with respect to $M$) partial bijections that have the same domain of definition and image as $\sigma$.
\end{lemma}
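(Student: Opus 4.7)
The plan is to work directly with a point $x \in \mathbb{R}^n$ realising $T$ as its type and exploit the fact that, since $\sigma$ and a permanent-attaining $\tau$ share a common domain $J$ and image $I$, certain sums of $x$-coordinates telescope. The argument will ultimately be a standard ``sum of nonnegative terms is zero implies each is zero'' trick applied to the defining inequalities of domination.

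First, I would fix $x \in \mathbb{R}^n$ with $T = \mathrm{type}(x)$ and let $J \subseteq [d]$, $I \subseteq [n]$ denote the domain and image of $\sigma$. Since $\sigma \preceq T$, for each $j \in J$ the column $M_{\star j}$ dominates $x$ in position $\sigma(j)$, so
\[
x_{\sigma(j)} - M_{\sigma(j),j} \;=\; \min_{k \in [n]}\bigl\{x_k - M_{k,j}\bigr\}
\;\leq\; x_{\tau(j)} - M_{\tau(j),j}
\]
for every bijection $\tau : J \to I$ and every $j \in J$. Summing over $j \in J$ and using that $\sigma$ and $\tau$ have the same image (so $\sum_{j \in J} x_{\sigma(j)} = \sum_{i \in I} x_i = \sum_{j \in J} x_{\tau(j)}$) gives
\[
\sum_{j \in J} M_{\tau(j),j} \;\leq\; \sum_{j \in J} M_{\sigma(j),j}.
\]
(In passing this reproves, via Theorem \ref{permanentsatis}, that $\sigma$ is permanent-attaining.)

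Next, suppose $\tau : J \to I$ is itself permanent-attaining with the same domain $J$ and image $I$ as $\sigma$. Then $\sum_{j \in J} M_{\tau(j),j} = \sum_{j \in J} M_{\sigma(j),j}$, so combining with the telescoping of the $x$-terms yields
\[
\sum_{j \in J}\bigl(x_{\tau(j)} - M_{\tau(j),j}\bigr) \;=\; \sum_{j \in J}\bigl(x_{\sigma(j)} - M_{\sigma(j),j}\bigr).
\]
Since every term on the left is bounded below by the corresponding term on the right (by the displayed inequality above), equality of the sums forces equality termwise: $x_{\tau(j)} - M_{\tau(j),j} = \min_k\{x_k - M_{k,j}\}$ for every $j \in J$. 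This says exactly that $M_{\star j}$ dominates $x$ in position $\tau(j)$, i.e.\ $T_{\tau(j), j} = 1$ for each $j \in J$, and hence $\tau \preceq T$, as required.

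There is no serious obstacle here: the whole argument is a two-line telescoping summation once one has the defining inequalities of domination. The only point requiring a little care is the observation that $\sigma$ and $\tau$ sharing a common image is what makes the $x$-coordinates cancel in the sum, so that the permanent equality translates directly into equality of the pertinent domination expressions.
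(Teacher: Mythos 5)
Your proof is correct and follows essentially the same route as the paper: sum the domination inequalities $x_{\sigma(j)} - M_{\sigma(j),j} \leq x_{\tau(j)} - M_{\tau(j),j}$ over $j \in J$, use the common image $I$ to cancel the $x$-coordinates, and use the equality of the two permanent contributions to force termwise equality, whence $x$ satisfies $\tau$. The only cosmetic difference is that you rederive directly that $\sigma$ attains the permanent, whereas the paper quotes Theorem \ref{permanentsatis} for this.
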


\begin{proof}
Since $T$ is a type, there exists $x \in \mathbb{R}^n$ having type $T$. By definition, $x$ satisfies $A \in \mathbb{B}^{n \times d}$ only if $A\preceq T$, and so in particular $x$ satisfies $\sigma$. Now if $\tau$ is any permanent-attaining partial bijection with the same domain of definition and image as $\sigma$ it follows from Theorem \ref{permanentsatis} that $\tau$ is satisfiable. We show that $x$ also satisfies $\tau$, from which we conclude that $T$ contains $\tau$.

Since $\sigma$ and $\tau$ are satisfiable it follows from Theorem \ref{permanentsatis} that these partial bijections attain the permanent $P$ in the submatrix of $M$ with rows indexed by $I$ and columns indexed by $J$:

$$P \ = \ \sum_{j \in J} M_{\sigma(j),j} \ = \ \sum_{j \in J} M_{\tau(j),j}.$$

Suppose that $x$ satisfies $\sigma$. In other words, suppose that
$$x_{\sigma(j)} - M_{\sigma(j),j} \leq x_t - M_{t,j} \mbox{ for all }j \in J \mbox{  and all }t \in [n].$$
We note that since the image of $\tau$ is a subset of $[n]$, we have in particular
$$x_{\sigma(j)} - M_{\sigma(j),j} \leq x_{\tau(j)} - M_{\tau(j),j} \mbox{ for all }j \in J.$$
We claim that each of the last inequalities has to be an equality. Indeed, suppose not. Then summing them gives
$$\sum_{j \in J} x_{\sigma(j)} - M_{\sigma(j),j} \ < \ \sum_{j \in J} x_{\tau(j)} - M_{\tau(j),j}.$$
But since $\sigma$ and $\tau$ are bijections with the same \emph{image} $I$, we see that this becomes
$$\sum_{i \in I} x_{i} - \sum_{j \in J}M_{\sigma(j),j} \ < \ \sum_{i \in I} x_{i} - \sum_{j \in J}M_{\tau(j),j}.$$
We note that the first summations on either side are identical, whilst each of the remaining summations is equal to $P$ by assumption, giving a contradiction and proving the claim. Thus we must have
$$x_{\tau(j)} - M_{\tau(j),j} = x_{\sigma(j)} - M_{\sigma(j),j} \leq x_{t} - M_{t,j}$$
for all $j \in J$ and all $t \in [n]$. Or in other words, $x$ satisfies $\tau$.
\end{proof}

\section{Characterizing types}

Given a matrix $M$ defining a tropical complex and an $n \times d$ Boolean matrix (or equivalently, a $d$-tuple of subsets of $[n]$) $S$, it is natural to ask whether $S$ is a type of the complex. Our results so far suffice to establish some elementary properties which $S$ must
possess if it is to be a type; our aim in this section is to establish a collection of conditions which are necessary and sufficient for $S$
to be a type:
\begin{theorem}
\label{thm:types}
Let $M \in \mathbb{R}^{n \times d}$ and $S \in \mathbb{B}^{n\times d}$. Then $S$ is a type of the tropical complex corresponding to $M$ if and only if:
\begin{itemize}
\item[(T1)] every column of $S$ is non-empty;
\item[(T2)] every partial bijection contained in $S$ is permanent-attaining with respect to $M$; and
\item[(T3)] if $S$ contains a partial bijection $\sigma$, then $S$ contains all of the permanent-attaining (with respect to $M$) partial bijections with the same domain of definition and image as $\sigma$.
\end{itemize}
\end{theorem}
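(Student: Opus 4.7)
For the forward direction, if $S = {\rm type}(x)$ for some $x$, then (T1) holds because $\min_k(x_k - M_{k,j})$ is always attained at some row, (T2) follows from Corollary \ref{cor:bijectionsat} (since $S$ is satisfied by $x$), and (T3) is precisely the content of Lemma \ref{IJtypes}.

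The substance of the theorem lies in the converse. Assuming (T1)--(T3), Corollary \ref{cor:bijectionsat} gives that $S$ is satisfiable, so I choose a point $x$ satisfying $S$ for which $T := {\rm type}(x)$ is minimal (with respect to $\preceq$) among the types above $S$; such a minimum exists by finiteness of $\mathbb{B}^{n \times d}$. The plan is to show $T = S$ by contradiction: supposing $(i_0, j_0) \in T \setminus S$, I introduce the directed graph $D$ on $[n]$ in which $i \to k$ whenever some column $j$ satisfies $i \in S_{\star j}$ and $k \in T_{\star j}$, and then distinguish two cases. The edges of $D$ record precisely the linear constraints $v_i \leq v_k$ that cut out the tangent cone to $D_S$ at $x$, and by (T1) the column $j_0$ contributes an edge $i' \to i_0$ for each $i' \in S_{\star j_0}$.

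\emph{Case 1: $i_0$ reaches some $i' \in S_{\star j_0}$ in $D$.} Taking a shortest directed path $i_0 = b_0 \to b_1 \to \cdots \to b_m = i'$ with corresponding columns $j_l$ (so that $b_{l-1} \in S_{\star j_l}$ and $b_l \in T_{\star j_l}$), a short shortest-path argument will show that the rows $b_0, \ldots, b_m$ and the columns $j_0, j_1, \ldots, j_m$ are all distinct: any column repeat $j_l = j_{l'}$ with $l < l'$ yields a short-cut edge $b_{l-1} \to b_{l'}$ via the shared column, and any occurrence $j_l = j_0$ would let the path terminate earlier at $b_{l-1} \in S_{\star j_0}$. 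I then form the partial bijection $\sigma$ with $\sigma(j_l) = b_{l-1}$ for $1 \leq l \leq m$ and $\sigma(j_0) = i'$, which is contained in $S$, and its ``cyclic shift'' $\tau$ with $\tau(j_l) = b_l$ for $1 \leq l \leq m$ and $\tau(j_0) = i_0$, which has the same domain and image as $\sigma$ and is contained in $T$. Since $T$ is satisfiable (being a type), Corollary \ref{cor:bijectionsat} makes $\tau$ permanent-attaining, so (T3) applied to $\sigma$ forces $\tau \preceq S$, contradicting $S_{i_0, j_0} = 0$.

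\emph{Case 2: $i_0$ reaches no element of $S_{\star j_0}$ in $D$.} Let $v \in \mathbb{R}^n$ be the indicator vector of the forward-reachable set of $i_0$ in $D$. The defining property of $D$ makes $v$ a tangent direction to $D_S$ at $x$, so $x' := x + \epsilon v$ satisfies $S$ for all sufficiently small $\epsilon > 0$; preservation of strict inequalities then gives ${\rm type}(x') \preceq T$. Since $v_{i_0} = 1$ while $v_{i'} = 0$ for every $i' \in S_{\star j_0}$ (non-empty by (T1)), the entry $(i_0, j_0)$ is strictly removed from ${\rm type}(x')$, contradicting the minimality of $T$. The main obstacle of the whole argument is the shortest-path analysis in Case 1 --- verifying that the column labels along the path can be taken to be pairwise distinct and distinct from $j_0$ --- since this is what allows the cyclic shift $\tau$ to be a genuine partial bijection to which (T3) can be applied.
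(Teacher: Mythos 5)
Your proof is correct, and its overall strategy coincides with the paper's: get satisfiability of $S$ from (T2) via Corollary \ref{cor:bijectionsat}, pick a point $x$ satisfying $S$ whose type $T$ is $\preceq$-minimal among types of points satisfying $S$, and for an entry $(i_0,j_0)\in T\setminus S$ either perturb $x$ inside $D_S$ to strictly shrink the type (contradicting minimality) or produce two partial bijections with the same domain and image, one contained in $S$ and one in $T$, and invoke (T3). Your edge relation $i\to k$ (some column $j$ with $i\in S_{\star j}$ and $k\in T_{\star j}$) is exactly the relation $\sim$ of Lemma \ref{technical}, and your tangent-cone justification that $x+\varepsilon v$ still satisfies $S$ for $v$ the indicator of a forward-closed set is precisely the content of Lemma \ref{technical}(iii), with your Case 2 playing the role of Lemma \ref{induct}'s perturbation step. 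Where your write-up genuinely streamlines the paper's is Case 1: the paper builds the possibly degenerate tuples $Q\preceq S$ and $R\preceq T$ from a chain $i\sim\cdots\sim m$ and must then extract honest partial bijections $Q', R'$ by an iterative $\rho$-chasing argument, whereas your shortest-path choice makes the rows $b_0,\dots,b_m$ and columns $j_1,\dots,j_m$ distinct automatically, so $\sigma$ and its shift $\tau$ are partial bijections outright and (T3) applies immediately. One small wording fix: to rule out $j_l=j_0$ you should choose $i'$ to be an element of $S_{\star j_0}$ at minimal distance from $i_0$ (equivalently, take a shortest path from $i_0$ to the set $S_{\star j_0}$ rather than to a preselected $i'$), since an occurrence $j_l=j_0$ yields a shorter path to a possibly different element of $S_{\star j_0}$; with that choice both distinctness claims follow exactly as you argue and the contradiction with $S_{i_0,j_0}=0$ goes through.
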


Notice that the three conditions (T1), (T2) and (T3) depend only on the matrix $S$ and the permanent structure of $M$; there is no direct dependence on the matrix $M$ or its
associated tropical complex. It follows that one can determine whether a given Boolean matrix is a type by reference only to the permanent structure. Conversely, by
Theorem~\ref{permanentsatis} a partial bijection is permanent-attaining if and only if it is contained in a type, so the permanent structure can be also be recovered from the
 types. This justifies the following claim which we made earlier:
\begin{corollary}\label{cor:sameinfo}
The types of the tropical complex associated to a matrix $M$ are completely determined by the permanent structure of $M$, and vice versa.
\end{corollary}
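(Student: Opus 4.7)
The plan is to derive both directions of the equivalence directly from the two main results already proved, namely Theorem \ref{thm:types} and Theorem \ref{permanentsatis}. Nothing further is needed; the corollary is essentially a repackaging of those results and the key observations have already been flagged in the preceding commentary.

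For the direction ``$\mathcal{P}(M)$ determines $\mathcal{F}(M)$'', I would invoke Theorem \ref{thm:types}. Its three conditions characterize when a Boolean matrix $S \in \mathbb{B}^{n \times d}$ belongs to $\mathcal{F}(M)$: condition (T1) is internal to $S$, while (T2) and (T3) involve $M$ only through the set of partial bijections that are permanent-attaining, i.e.\ only through $\mathcal{P}(M)$. Consequently, given the data of $\mathcal{P}(M)$ alone, one can test every $S \in \mathbb{B}^{n \times d}$ against (T1)--(T3) and thereby recover $\mathcal{F}(M)$ exactly.

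For the direction ``$\mathcal{F}(M)$ determines $\mathcal{P}(M)$'', I would apply Theorem \ref{permanentsatis}: a partial bijection $\sigma$ lies in $\mathcal{P}(M)$ if and only if it is satisfiable with respect to $M$. By the remark preceding Theorem \ref{thm:bijectionsat}, $\sigma$ is satisfiable if and only if there exists $x \in \mathbb{R}^n$ satisfying $\sigma$, which happens if and only if $\sigma \preceq T$ where $T = \mathrm{type}(x) \in \mathcal{F}(M)$. Hence
$$\mathcal{P}(M) \; = \; \{\sigma : \sigma \text{ is a partial bijection with } \sigma \preceq T \text{ for some } T \in \mathcal{F}(M)\},$$
and this set is manifestly recoverable from $\mathcal{F}(M)$.

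I do not anticipate any real obstacle here: the heavy lifting has already been carried out in Theorems \ref{thm:bijectionsat}, \ref{permanentsatis}, and \ref{thm:types}. The only ``content'' of the proof is to point out the asymmetry of reference in each direction — that each of $\mathcal{F}(M)$ and $\mathcal{P}(M)$ appears in a characterization of the other — and to spell out the resulting two-way determination.
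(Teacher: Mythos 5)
Your argument is correct and is essentially the paper's own: the forward direction is exactly the observation that (T1)--(T3) in Theorem~\ref{thm:types} refer to $M$ only through $\mathcal{P}(M)$, and the converse is the paper's appeal to Theorem~\ref{permanentsatis} (a partial bijection is permanent-attaining iff it is satisfiable iff it is contained in a type). No gaps; nothing further needed.
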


We now turn our attention to the proof of Theorem~\ref{thm:types}. We begin with the direct implication, which is straightforward modulo what we have already shown:

\begin{proof}[Proof of the direct implication of Theorem~\ref{thm:types}]
Suppose $S$ is a type. Choose a point $x \in \mathbb{R}^n$ of which it is the type. Then:
\begin{itemize}
\item[(T1)] Since the point $x$ must lie in at least one of the sectors relative to the hyperplane specified by a column of $M$, we see that each column of $M$ dominates $x$ in some position and thus each column of $S$ must be non-empty.
\item[(T2)] The point $x$ satisfies every $T$ such that $T \preceq S$, and hence in particular every partial bijection contained in $S$. Thus, by Theorem~\ref{permanentsatis},
every such partial bijection must be permanent-attaining.
\item[(T3)] This is Lemma \ref{IJtypes} above.
\end{itemize}
\end{proof}

To establish the converse implication of Theorem~\ref{thm:types} we shall need some further technical lemmas:
\begin{lemma}
\label{technical}
Let $S=(S_{1\star}, \ldots, S_{n\star})^{\mathsf{T}} \in \mathcal{S}(M)$ satisfying conditions (T1) and (T3) above and choose a point $x$ satisfying $S$. Let $T=(T_{1\star}, \ldots, T_{n\star})^{\mathsf{T}}$ be the type of $x$ and define a binary relation $\sim$ on $[n]$ by $p \sim q$ if $S_{p\star} \cap T_{q\star} \neq \emptyset$.

\begin{itemize}
\item[(i)] If $p \sim q$ then $x_{p} - x_{q} = M_{p,g} - M_{q,g}$ for all $g \in S_{p\star} \cap T_{q\star}$.
\item[(ii)] If $p \sim q$, $y$ satisfies $S$ and $y_p = x_p + \varepsilon$ for some $\varepsilon>0$, then $y_q \geq x_q + \varepsilon$.
\item[(iii)] Suppose that $I \subseteq [n]$ with the property that if $p \in I$ and $p \sim q$ then $q \in I$. Let $\varepsilon >0$ and let $y(\varepsilon)$ be the element obtained from $x$ by adding $\varepsilon$ to the entries indexed by $I$. Then there exists $\kappa>0$ such that $y(\varepsilon)$ satisfies $S$ for all $0 <\varepsilon<\kappa$.
\end{itemize}
\end{lemma}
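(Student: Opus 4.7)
Parts (i) and (ii) should be routine manipulations of the domination condition, while (iii) is the substantive part of the lemma where the closure hypothesis on $I$ does the real work. For (i), I would observe that $g \in S_{p\star}$ together with the assumption that $x$ satisfies $S$ forces $M_{\star g}$ to dominate $x$ in position $p$, giving $x_p - M_{p,g} \leq x_q - M_{q,g}$, while $g \in T_{q\star}$ (i.e.\ $M_{\star g}$ dominates $x$ in position $q$, since $T$ is the type of $x$) yields the opposite inequality; combining these gives the claimed equality $x_p - x_q = M_{p,g} - M_{q,g}$.

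For (ii), I would pick any $g \in S_{p\star} \cap T_{q\star}$, which exists by definition of $p \sim q$. Since $y$ satisfies $S$ and $g \in S_{p\star}$, the column $M_{\star g}$ dominates $y$ in position $p$, so $y_p - M_{p,g} \leq y_q - M_{q,g}$. Substituting $M_{q,g} - M_{p,g} = x_q - x_p$ from (i) gives $y_q \geq y_p + (x_q - x_p) = x_q + \varepsilon$, as required.

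For (iii), I would verify directly that for each $(i,j)$ with $S_{i,j} = 1$ and each $k \in [n]$, the domination inequality $y(\varepsilon)_i - M_{i,j} \leq y(\varepsilon)_k - M_{k,j}$ holds, splitting into cases according to whether $i$ and $k$ belong to $I$. In the three easy cases (both in, both out, or $i \notin I$ with $k \in I$) the inequality is inherited from the corresponding inequality for $x$: the added $\varepsilon$ either cancels on both sides, is absent on both sides, or appears only on the right.

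The main obstacle is the remaining case $i \in I$, $k \notin I$, where we need the strict inequality $x_i - M_{i,j} < x_k - M_{k,j}$ for $x$ in order to absorb the $+\varepsilon$ perturbation on the left. This is precisely where the closure hypothesis on $I$ enters: were the inequality an equality, then $M_{\star j}$ would dominate $x$ in position $k$ as well, putting $j \in T_{k\star}$; combined with $j \in S_{i\star}$ (from $S_{i,j}=1$), this gives $i \sim k$, forcing $k \in I$ by closure under $\sim$, a contradiction. Hence each such gap $(x_k - M_{k,j}) - (x_i - M_{i,j})$ is strictly positive, and since $[n]$ and $[d]$ are finite there are only finitely many such problematic triples $(i,j,k)$. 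Taking $\kappa$ to be the minimum of these positive gaps then provides the required threshold below which $y(\varepsilon)$ satisfies $S$.
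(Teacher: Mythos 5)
Your proposal is correct and follows essentially the same route as the paper: parts (i) and (ii) by the same two-sided domination manipulation, and part (iii) by isolating the case $i \in I$, $k \notin I$, using the closure of $I$ under $\sim$ to show the relevant gaps $x_k - M_{k,j} - (x_i - M_{i,j})$ are strictly positive, and taking $\kappa$ to be their (finite) minimum.
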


\begin{proof}
(i) Suppose that $p \sim q$ and let $g \in S_{p\star} \cap T_{q\star}$. Since $g \in S_{p\star}$ and $x$ satisfies $S$, we have $x_{p} - M_{p,g} \leq x_q - M_{q,g}.$ On the other hand, since $g \in T_{q\star}$ and $x$ satisfies $T$, we have $x_{q} - M_{q,g} \leq x_p - M_{p,}.$ It follows immediately from these two inequalities that $x_{p} - x_{q} = (M_g)_p - (M_g)_q.$

(ii) Suppose that $p \sim q$ and choose $g \in S_{p\star} \cap T_{q\star}$. By part (i) we know that $x_{p} - x_{q} = M_{p,g} - M_{q,g}$. Now, since $y$ satisfies $S$ we have
$$y_{p} - M_{p,g} \leq y_q - M_{q,g}.$$
Rearranging this last inequality and using the fact that $y_p = x_p + \varepsilon$ gives
$$x_p + \varepsilon = y_{p} \leq y_q+ M_{p,g}  - M_{q,g} = y_q + x_p - x_q,$$
and hence $x_q + \varepsilon \leq y_q$ as required.

(iii) For any $\varepsilon >0$ we have that $x \leq y(\varepsilon)$ with equality in positions not indexed by $I$. It follows that $y(\varepsilon)$ must satisfy the conditions specified by the subsets $S_{k\star}$ with $k \notin I$; specifically, if $j \in S_{k\star}$ for some $k \notin I$, then
$$y(\varepsilon)_k - M_{k,j} = x_k - M_{k,j} \leq x_t - M_{t,j} \leq y(\varepsilon)_t - M_{t,j}, \mbox{ for all } t \in [n].$$
Moreover, $j \in S_{k\star}$ for some $k \in I$, then
$$y(\varepsilon)_k - M_{k,j} = x_k + \varepsilon - M_{k,j} \leq x_t + \varepsilon - M_{t,j}, \mbox{ for all } t \in [n].$$
In particular, this gives
$$y(\varepsilon)_k - M_{k,j} \leq y(\varepsilon)_t - M_{t,j}, \mbox{ for all } t \in I.$$
It remains to show that we can choose $\varepsilon$ small enough so that $y(\varepsilon)$ satisfies the remaining inequalities of $S$ too. Thus we need to be able to choose $\varepsilon>0$ to simultaneously satisfy the inequalities
$$x_k + \varepsilon -M_{k,j} \leq x_q - M_{q,j}, \mbox{ where } k \in I, j \in S_{k\star} \mbox{ and } q \notin I.$$
Or in other words, we want to show that there exists $\varepsilon$ satisfying
$$0< \varepsilon \leq {\rm min}\{ x_q-x_k +M_{k,j} - M_{q,j}\},$$
where the minimum takes place over all $k \in I$, $j \in S_{k\star}$ and $q \notin I$. Suppose not.  Then, since there are only finitely many terms in the minimum, we must have
$$x_q-x_k +M_{k,j} - M_{q,j} \leq 0 \; \mbox{ for some } k \in I, j \in S_{k\star} \mbox{  and }q \notin I.$$
Rearranging this gives
$$x_q - M_{q,j} \leq x_k -M_{k,j}.$$
Since $j \in S_{k\star}$ we know that $x$ is dominated by $M_{\star j}$ in position $k$, and so
$$x_k - M_{k,j} = {\rm min}_t\{x_t -M_{t,j}\} \leq x_q -M_{q,j}.$$
But then the last two inequalities combine to give
$$x_q - M_{q,j} = x_k - M_{k,j} = {\rm min}_t\{x_t -M_{t,j}\},$$
showing that $j \in S_{k\star} \cap T_{q\star}$, and hence $k \sim q$. Since $k \in I$, by assumption we must have that $q \in I$ too, contradicting $ q \notin I$.
\end{proof}

We shall use the previous lemma to give an inductive argument for our characterization of types.

\begin{lemma}
\label{induct}
Let $S \in \mathcal{S}(M)$ satisfying conditions (T1) and (T3) above, and choose a point $x \in \mathbb{R}^n$ satisfying $S$. Let $T$ be the type of $x$ and suppose that $T \neq S$. Then there exists $y \in \mathbb{R}^n$ such that $S \preceq {\rm type}(y) \preceq T$ with ${\rm type}(y) \neq T$.
\end{lemma}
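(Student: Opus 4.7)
The plan is to produce $y$ as a small perturbation of $x$ of the form $y = x + \varepsilon\,\mathbf{1}_I$ for a suitably chosen $\sim$-forward-closed subset $I \subseteq [n]$ and a sufficiently small $\varepsilon > 0$. Choose any position $(q, j)$ with $T_{q,j} = 1$ but $S_{q,j} = 0$ (one exists since $T \neq S$), and take $I$ to be the forward closure of $\{q\}$ under $\sim$. Lemma~\ref{technical}(iii) guarantees that for small $\varepsilon$ the point $y$ still satisfies $S$, so $S \preceq {\rm type}(y)$. A direct column-by-column comparison of $y_k - M_{k,j'}$ with $x_k - M_{k,j'}$ shows that ${\rm type}(y)_{\star j'} \subseteq T_{\star j'}$ for every $j'$, with strict containment precisely when $T_{\star j'}$ meets both $I$ and its complement, in which case ${\rm type}(y)_{\star j'} = T_{\star j'} \setminus I$. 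Since $q \in T_{\star j} \cap I$, it therefore suffices to produce some element of $T_{\star j}$ lying outside $I$.

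The heart of the argument is to establish the existence of such an element, using conditions (T1) and (T3). Suppose for contradiction that $T_{\star j} \subseteq I$. By (T1), $S_{\star j}$ is non-empty, and $S \preceq T$ gives $S_{\star j} \subseteq I$; pick $r \in S_{\star j}$ for which the shortest $\sim$-chain $q = p_0 \sim p_1 \sim \cdots \sim p_k = r$ has minimum length. For each step choose a witness $g_i \in S_{p_{i-1},\star} \cap T_{p_i,\star}$; note also that $j$ itself witnesses the closing edge $r \sim q$, since $S_{r,j} = 1$ and $T_{q,j} = 1$. Minimality arguments then force the rows $p_0, \dots, p_k$ to be pairwise distinct and the columns $g_1, \dots, g_k, j$ to be pairwise distinct: a coincidence $g_i = g_{i'}$ with $i < i'$ would yield $p_{i-1} \sim p_{i'}$ directly via $g_i$ and shorten the chain, while $g_i = j$ would put $p_{i-1}$ in $S_{\star j}$ at distance at most $i-1 < k$ from $q$, contradicting the minimal choice of $r$ (unless $p_{i-1} = q$, which gives $S_{q,j} = 1$ outright).

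Define partial bijections $\sigma^*, \tau^* : [d] \dashrightarrow [n]$ with common domain $\{g_1, \dots, g_k, j\}$ and common image $\{p_0, \dots, p_k\}$ by setting $\sigma^*(g_i) = p_{i-1}$, $\sigma^*(j) = r$ and $\tau^*(g_i) = p_i$, $\tau^*(j) = q$. Applying Lemma~\ref{technical}(i) to each link of the chain and to the closing edge $r \sim q$ via $j$ and summing telescopically yields
$$\sum_{i=1}^k (M_{p_{i-1},g_i} - M_{p_i,g_i}) + (M_{r,j} - M_{q,j}) = 0,$$
so $\sigma^*$ and $\tau^*$ make identical contributions to the max-plus permanent of the corresponding $(k+1) \times (k+1)$ submatrix of $M$. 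Since $\sigma^* \preceq S$ is satisfiable, Theorem~\ref{permanentsatis} makes it permanent-attaining, hence so is $\tau^*$; condition (T3) then forces $\tau^* \preceq S$, in particular $S_{q,j} = 1$ --- contradicting our initial choice of $(q,j)$.

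The main obstacle is the combinatorial bookkeeping required to realize $\sigma^*$ and $\tau^*$ as honest partial bijections: this requires the row and column indices involved to be pairwise distinct, which is precisely what the minimal choice of both $r$ and the chain delivers. Once $I$ is known to contain $q$ while missing some element of $T_{\star j}$, the rest of the proof reduces to the straightforward $\varepsilon$-analysis sketched in the first paragraph, showing that ${\rm type}(y(\varepsilon))_{q,j} = 0$ while ${\rm type}(y(\varepsilon)) \preceq T$ everywhere.
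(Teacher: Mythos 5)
Your proof is correct, and its overall architecture is the same as the paper's: perturb $x$ upward on a $\sim$-forward-closed set $I$ (so that Lemma~\ref{technical}(iii) keeps $S$ satisfied while a small $\varepsilon$ keeps the type below $T$), and show that if no suitable $I$ exists then a $\sim$-chain yields two partial bijections with common domain and image, one inside $S$ and one inside $T$, whence (T3) forces the forbidden entry into $S$. Where you genuinely depart from the paper is in the combinatorial extraction of that pair. The paper takes an arbitrary chain from the bad row to a fixed row $m$ with $S_{m,j}=1$, forms the shifted tuples $Q\preceq S$ and $R\preceq T$ (whose columns may repeat), and then extracts a suitable pair $Q',R'$ by an explicit cycle-chasing argument; you instead choose $r\in S_{\star j}$ at minimal $\sim$-distance from $q$ together with a shortest chain, and the minimality forces the rows $p_0,\dots,p_k$ and the columns $g_1,\dots,g_k,j$ to be pairwise distinct, so the chain's witnesses already constitute the two partial bijections $\sigma^*\preceq S$ and $\tau^*\preceq T$ with no further surgery. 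This is arguably a cleaner route through the same bottleneck. One further remark: your telescoping computation via Lemma~\ref{technical}(i), showing $\sigma^*$ and $\tau^*$ have equal weight, is correct but unnecessary --- since $\tau^*\preceq T$ and $x$ has type $T$, the point $x$ satisfies $\tau^*$, so Theorem~\ref{permanentsatis} already gives that $\tau^*$ is permanent-attaining, which is exactly how the paper handles this step (your identity does give an independent, purely arithmetic certificate, which is a mild bonus but not needed).
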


\begin{proof}
We shall construct $y$ by increasing $x_i$ by a small amount $\varepsilon$ for all positions $i$ contained in some carefully chosen subset $I \subseteq [n]$. It is straightforward to check that given any choice of $I$ we can choose $\varepsilon$ small enough so that ${\rm type}(y)\preceq T$.

Suppose that $I$ has the property that if $p \in I$ and $p \sim q$ then $q \in I$. Then Lemma~\ref{technical} (iii) tells us that we can choose $\varepsilon$ so that $S$ is still satisfied. Thus for any such a choice of $I$ we may choose $\varepsilon$ small enough so that $S \preceq {\rm type}(y) \preceq T$. It remains to show that some choice of $I$ gives ${\rm type}(y)\neq T$.

Let $S = (S_{1\star}, \ldots, S_{n\star})^{\mathsf{T}}$ and $T= (T_{1\star}, \ldots, T_{n\star})^{\mathsf{T}}$. Since $S \neq T$, we have $j \in T_{i\star}$, but $j \notin S_{i\star}$ for some $j \in [d]$ and $i \in [n]$. Since $S$ satisfies (T1), $j$ must occur somewhere in $S$. Suppose that $j \in S_{m\star}$. We first show that choosing $I$ so that $i \in I$ and $m \notin I$ yields $j \notin {\rm type}(y)_{i \star}$ and hence ${\rm type}(y)\neq T$. Suppose for contradiction that $j \in {\rm type}(y)_{i\star}$. Then $M_{\star j}$ dominates $y$ in position $i$ we have in particular
$$y_i - M_{i,j} \leq y_m - M_{m,j}$$
and since $i \in I$ and $m \notin I$ this gives
$$x_i + \varepsilon  - M_{i,j} \leq x_m - M_{m,j}.$$
But since $j \in S_{m\star}$ and $x$ satisfies $S$, we also know that $M_{\star j}$ dominates $x$ in position $m$, giving in particular that
$$x_m - M_{m,j} \leq x_i - M_{i,j}.$$
It is easy to see that the last two inequalities combined contradict $\varepsilon >0$.

To complete the proof we claim that there is a subset $I \subseteq [n]$ such that
\begin{itemize}
\item[(a)] if $p \in I$ and $p \sim q$ then $q \in I$; and
\item[(b)] $i \in I$ and $m \notin I$.
\end{itemize}
Suppose for contradiction that any subset $I$ satisfying condition (a) and containing $i$ must also contain $m$. It follows that there must be a sequence
$$i \sim p_2 \sim \cdots \sim p_{r-1} \sim m.$$
By relabelling co-ordinates as necessary we may assume that $i=1$, $p_2=2, \ldots, p_{r-1}=r-1$ and $m=r$, so that $j \in T_{1\star}\cap S_{r\star}$, but $j \notin S_{1\star}$ and
$$1 \sim 2 \sim \cdots \sim r-1 \sim r.$$
Thus for $s=1, \ldots, r-1$ we may choose $j_s \in S_{s\star} \cap T_{{s+1}\star}$, giving:
\begin{eqnarray*}
Q=(j_1, j_2, \ldots, j_{r-1}, j, \emptyset, \ldots, \emptyset)^{\mathsf{T}} \preceq S\\
R=(j ,j_1, j_2, \ldots, j_{r-1}, \emptyset, \ldots, \emptyset)^{\mathsf{T}} \preceq T.
\end{eqnarray*}
It is clear that any partial bijection contained in $Q$ or $R$ will be satisfiable (by Theorem \ref{thm:bijectionsat} it will be satisfied by $x$), and hence must attain the permanent in the corresponding submatrix of $M$. We claim that there are partial bijections $Q' \preceq Q$ and $R' \preceq R$ such that $Q'$ and $R'$ have the same domain of definition and the same image, with $Q'_1=Q_1$ and $R'_1=R_1$. We first note that this will give the desired contradiction.  Since $S$ satisfies condition (T3) we know that the set of all partial bijections contained in $S$ contains either all or none of the permanent-attaining bijections on any given submatrix. Thus it follows from the fact that $Q'$ is a partial bijection contained in $S$ that $R'$ (a partial bijection with the same domain and image) must be contained in $S$ too. But then $\{j\} = R'_1 \subseteq S_1$, giving a contradiction.

The partial bijections $Q'$ and $R'$ are constructed as follows. We know that $j_1 \in Q_{1\star}$, $j \in R_{1\star}$, $j_1 \neq j$ and $j$ occurs in at least one row of $Q$. Let $i_1 \in \{2, \ldots, r\}$ be the first row of $Q$ in which $j$ occurs and let $\rho(j)$ denote the element contained in $R_{i_1\star}$. Since $R$ is just $Q$ shifted one place to the right it is clear that $\rho(j) \neq j$. If $\rho(j)=j_1$, then we will have partial bijections of the form
$$\begin{array}{c c c c c c c c c c c}
Q' &=&(j_1, & \emptyset, & \ldots, &\emptyset,& j,& \emptyset,& \ldots,& \emptyset)^{\mathsf{T}}& \preceq S\\
R'&=& (j ,&\emptyset,& \ldots,& \emptyset,& j_{1},& \emptyset,& \ldots,& \emptyset)^{\mathsf{T}}& \preceq T.
\end{array}
$$

Otherwise, $\rho(j)\neq j_1$. Let $i_2$ be the first position of $Q$ in which $\rho(j)$ occurs. Since $R$ shifts $Q$ one place to the right it is clear that $i_2<i_1$. Let $\rho^2(j)$ denote the element contained in $R_{i_2}$. Again, since $R$ shifts $Q$ one place to the right it is clear that $\rho^2(j) \neq \rho(j)$. If $\rho^2(j)=j_1$, then we will have partial bijections of the form
$$\begin{array}{c c c c c c c c c c c c c c c}
Q' &=&(j_1, & \emptyset, & \ldots, &\emptyset,&\rho(j)& \emptyset, & \ldots, &\emptyset,&  j,& \emptyset,& \ldots,& \emptyset)^{\mathsf{T}}& \preceq S\\
R'&=& (j ,&\emptyset,& \ldots,& \emptyset,&j_1& \emptyset, & \ldots, &\emptyset,& \rho(j),& \emptyset,& \ldots,& \emptyset)^{\mathsf{T}}& \preceq T.
\end{array}
$$
Otherwise $\rho^2(j) \neq \rho(j), j_1$. Continuing in this way will yield a cycle of the required form.
\end{proof}

We are now ready to complete the proof of Theorem~\ref{thm:types}:
\begin{proof}[Proof of the converse implication of Theorem~\ref{thm:types}]
Suppose then that $S$ satisfies the conditions (T1)-(T3). Since $S$ satisfies (T2), it follows from Corollary \ref{cor:bijectionsat} that we may choose $x\in \mathbb{R}^n$ such that $x$ satisfies $S$. Choose $x$ so that the type of $x$ is minimal with respect to the partial order $\preceq$, and denote this type by $T$. Thus $S \preceq T$ and if $T=S$, then we are done. Otherwise, we may apply Lemma \ref{induct} to find $y\in \mathbb{R}^n$ such that $S\preceq {\rm type}(y) \preceq T$ with ${\rm type}(y)\neq T$, contradicting our choice of $x$ with $T$ minimal.
\end{proof}

Our characterization of types gives a combinatorial way to understand the action of $\mathcal{P}_n$ on $\mathcal{F}(M)$ via the action on the set $\mathcal{P}(M)$.

\begin{corollary}
The right action of the hyperplane face monoid $\mathcal{P}_n$ of the braid arrangement on $\mathbb{B}^{n\times d}$ restricts to an action on $\mathcal{F}(M)$.
\end{corollary}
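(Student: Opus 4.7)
The plan is to verify that for any $T \in \mathcal{F}(M)$ and $F \in \mathcal{P}_n$, the Boolean matrix $T \circ F$ satisfies the three conditions (T1), (T2), (T3) of Theorem~\ref{thm:types}, and is therefore itself a type. This reduces the problem to a purely combinatorial verification, avoiding any need to produce a geometric witness.

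Conditions (T1) and (T2) will fall out almost immediately from what we already know. For (T1), each column of $T$ is non-empty (as $T$ satisfies (T1)), and the definition \eqref{setaction} of the action on the power set of $[n]$ guarantees that a non-empty subset is sent to a non-empty subset, so each column of $T \circ F$ is non-empty. For (T2), Proposition~\ref{prop_paction} (in its matrix form) gives $T \circ F \preceq T$, so every partial bijection contained in $T \circ F$ is also contained in $T$, and hence permanent-attaining because $T$ satisfies (T2).

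The main work lies in establishing (T3). Suppose $\sigma \preceq T \circ F$ is a partial bijection with domain $J \subseteq [d]$ and image $I \subseteq [n]$, and let $\tau$ be any permanent-attaining partial bijection with the same domain $J$ and image $I$. Since $T \circ F \preceq T$, we have $\sigma \preceq T$, so (T3) applied to $T$ gives $\tau \preceq T$. The crux is to show that in fact $\tau \preceq T \circ F$. Write $F = (F_1, \ldots, F_r)$ and define $m : [n] \to [r]$ by $m(i) = k$ iff $i \in F_k$. By definition of the action, for each $j \in J$, the column $(T \circ F)_{\star j}$ equals $T_{\star j} \cap F_{k(j)}$, where $k(j)$ is the largest index with $T_{\star j} \cap F_{k(j)} \neq \emptyset$. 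Since $\sigma(j) \in (T \circ F)_{\star j} \subseteq F_{k(j)}$, we have $m(\sigma(j)) = k(j)$. On the other hand, $\tau(j) \in T_{\star j}$, so $T_{\star j} \cap F_{m(\tau(j))}$ is non-empty, and the maximality of $k(j)$ forces $m(\tau(j)) \leq k(j) = m(\sigma(j))$ for every $j \in J$.

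The final step is a counting argument I expect to be the subtlest ingredient. Because $\sigma$ and $\tau$ are both bijections from $J$ onto the same image $I$, we have
\[
\sum_{j \in J} m(\sigma(j)) \;=\; \sum_{i \in I} m(i) \;=\; \sum_{j \in J} m(\tau(j)).
\]
Combined with the pointwise inequalities $m(\tau(j)) \leq m(\sigma(j))$, this forces $m(\tau(j)) = k(j)$ for every $j \in J$, so that $\tau(j) \in T_{\star j} \cap F_{k(j)} = (T \circ F)_{\star j}$, giving $\tau \preceq T \circ F$ as required. Thus $T \circ F$ satisfies (T1), (T2) and (T3), and by Theorem~\ref{thm:types} it lies in $\mathcal{F}(M)$.
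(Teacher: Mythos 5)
Your proposal is correct and follows essentially the same route as the paper: verify (T1)--(T3) of Theorem~\ref{thm:types} for $T \circ F$, with (T1) and (T2) immediate from \eqref{setaction} and Proposition~\ref{prop_paction}, and (T3) resting on the same pointwise inequality $m(\tau(j)) \leq m(\sigma(j))$ between partition-block indices obtained from the maximality in the definition of the action. The only divergence is the final step of (T3): the paper traces the orbits of $\tau$ to get cyclic chains of inequalities that collapse to equalities, whereas you sum the pointwise inequalities over $J$ and use that $\sigma$ and $\tau$ have the same image $I$ --- a valid and slightly slicker finish, in the same spirit as the summation argument of Lemma~\ref{IJtypes}.
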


\begin{proof}
Let $T=(T_{\star 1}, \dots, T_{\star d}) \in \mathcal{F}(M)$ and $P \in \mathcal{P}_n$. Using our characterization of types, we show that $T \circ P \in \mathcal{F}(M)$, by
showing that it satisfies the three conditions given by Theorem~\ref{thm:types}.

(T1) By definition we see that the $j$th column of $T \circ P$ is $T_{\star j} \circ P$. Since $T$ is a type, we know that $\emptyset \neq T_{\star j} \subseteq [n]$ and since $P$ is an ordered set partition of $[n]$ it is therefore clear that $T_{\star j}$ has non-empty intersection with at least one component of $P$. Thus by \eqref{setaction} we conclude that $T_{\star j} \circ P$ is non-empty.

(T2) By Proposition~\ref{prop_paction} we have $T \circ P \preceq T$, so any partial bijection contained in $T \circ P$ must also be contained in $T$, and so by Theorem~\ref{thm:types}
is permanent-attaining.

(T3) Suppose that $\sigma$ and $\tau$ are partial bijections contained in $T$ with the same domain $J \subseteq [d]$ and image $I \subseteq [n]$. By our previous remarks, it suffices to show that if $\sigma$ is contained in $T \circ P$ then $\tau$ is contained in $T\circ P$ too. Assume by relabelling if necessary that $I =J=[k]$, and that $\sigma(j) =j$ for all $i \in [k]$, with $\tau$ a non-identity permutation of $[k]$. Since $\sigma$ (the identity map on $[k]$) and $\tau$ are both contained in $T$, by definition we know that:
\begin{equation}
\label{eachj}j, \tau(j) \in T_{\star j}\; \mbox{ for all }j=1, \ldots, k.
\end{equation}
For each $j\in [k]$ we define
\begin{equation}
\label{indexm}
m(j) := {\rm{max}}\{m: T_{\star j} \cap P_m \neq \emptyset\},
\end{equation}
so that by \eqref{setaction} we have $T_{\star j} \circ P = P_{m(j)}$. Since $\sigma$ (the identity) is contained in $T \circ P$ it follows that
\begin{equation}
\label{partm}
j \in P_{\star m(j)} \; \mbox{ for all }j\in[k].
\end{equation}

Now fix $j$ and look at the orbit, $\{\tau(j), \ldots, \tau^p(j)=j\}$ say, of $j$ under $\tau$. By applying \eqref{eachj} and \eqref{partm} to each of the elements $\tau^i(j) \in [k]$ in turn, we obtain:
\begin{eqnarray*}
\tau(j) &\in& T_{\star \tau(j)}, T_{\star j}, P_{\star m(\tau(j))}\\
\tau^2(j) &\in& T_{\star \tau^2(j)}, T_{\star \tau(j)}, P_{\star m(\tau^2(j))}\\
\vdots\\
\tau^{p-1}(j) &\in& T_{\star \tau^{p-1}(j)}, T_{\star \tau^{p-2}(j)}, P_{\star m(\tau^{p-1}(j))}\\
j=\tau^p(j) &\in& T_{\star j}, T_{\star \tau^{p-1}(j)}, P_{\star m(j)}\\
\end{eqnarray*}
Note that if $m(j) < m(\tau(j))$ then by definition \eqref{indexm} we see that $T_{\star j} \cap P_{\star m(\tau(j))} = \emptyset$. Since $\tau(j)$ is contained in this intersection, we must have $m(j) \geq m(\tau(j))$. Continuing in this way we note that
$$m(j) \geq m(\tau(j)) \geq \cdots \geq m(\tau^p(j)) = m(j),$$ from which we conclude that these indices all coincide and hence by combining the information above we see that the set $P_{\star m(j)}$ contains the entire orbit of $j$ under $\tau$. In particular, $\tau(j) \in P_{\star m(j)}$ for all $j=1,\ldots, k$, hence showing that $\tau$ is contained in $T \circ P$ as required.
\end{proof}

Since $A\circ P \preceq A$ for all $A \in \mathbb{B}^{n \times d}$ it follows that if a type $T$ is equal to $T' \circ P$ for some other type $T'$, then $T'$ must be a face of $T$. Moreover, it is easy to see that each maximal dimensional cell contained in $\mathcal{F}(M)$ is a fixed point of the right action of $\mathcal{P}_n$. Thus (just as in the case of usual hyperplane arrangements), this right action cannot be used to generate a random walk on the maximal dimensional cells (``chambers'') of the tropical hyperplane arrangement. We also note that although $\mathcal{P}_n$ acts upon $\mathcal{F}(M)$, this action does not immediately restrict to give an action on the max-plus polytope generated by the columns of $M$ (that is, the collection of bounded cells). For example, acting upon the face $H$ of the tropical hyperplane arrangement given in  Example 3.1 by the element $(\{3\}, \{2\}, \{1\}) \in \mathcal{P}_3$ gives the unbounded face $E$, which is not contained in the max-plus column space of $M$.

\bibliographystyle{plain}

\end{document}